\documentclass[]{siamart171218} 
\setcounter{tocdepth}{1}
\setcounter{secnumdepth}{2}

\usepackage{graphicx}        


\usepackage{color}
\usepackage{amsmath}
\usepackage{amssymb}
\usepackage{mathrsfs}
\usepackage{mathtools}  
\usepackage[font=small,labelfont=bf]{caption}
\usepackage{hyperref}
\usepackage{color}
\usepackage{enumitem}
\usepackage{chngcntr}
\usepackage{bm} 
\usepackage{algorithmic,algorithm}
\usepackage[hang,flushmargin]{footmisc} 
\usepackage{xr}






\numberwithin{theorem}{section}
\numberwithin{equation}{section}

\renewcommand{\cal}[1]{\mathcal{#1}}

\newcommand{\n}{\mathbb{N}}


\newcommand{\zp}{\mathbb{Z}_+}
\newcommand{\mmag}[1]{\left|#1\right|}
\newcommand{\s}{\mathcal{S}}
\newcommand{\tws}{2^\mathcal{S}}

\newcommand{\norm}[1]{\left|\left|{#1}\right|\right|}


\newcommand{\Pb}{\mathbb{P}}

\newcommand{\Pbl}[1]{\mathbb{P}\left(#1\right)} 
\newcommand{\Ebl}[1]{\mathbb{E}\left[#1\right]} 

\headers{The exit time finite state projection scheme}{Juan Kuntz, Philipp Thomas, Guy-Bart Stan, and Mauricio Barahona}
%
\title{The exit time finite state projection scheme: \\ 
bounding exit distributions and occupation measures of continuous-time Markov chains\thanks{The first author was supported by a BBSRC PhD Studentship (BB/F017510/1). PT was supported through a Fellowship of The Royal Commission for the Exhibition of 1851. GBS acknowledges support by an EPSRC Fellowship for Growth (EP/M002187/1) and the EU H2020-FETOPEN-2016-2017 project 766840 (COSY-BIO). MB acknowledges support from EPSRC grant EP/N014529/1 supporting the EPSRC Centre for Mathematics of Precision Healthcare.}}

\author{
  Juan Kuntz\thanks{Department of Mathematics and Department of Bioengineering, Imperial College London, London, SW7 2AZ, UK
    (\email{juan.kuntz08@imperial.ac.uk}).
    }
  \and
  Philipp Thomas\thanks{Department of Mathematics, Imperial College London, London, SW7 2AZ, UK 
  (\email{p.thomas@imperial.ac.uk}).
  }
  \and
  Guy-Bart Stan\thanks{Department of Bioengineering, Imperial College London, London, SW7 2AZ, UK
    (\email{Co-corresponding author: g.stan@imperial.ac.uk}).}
  \and
  Mauricio Barahona\thanks{Department of Mathematics, Imperial College London, London, SW7 2AZ, UK
    (\email{Co-corresponding author: m.barahona@imperial.ac.uk}).}
}

\begin{document}

\maketitle
\begin{abstract}
We introduce the exit time finite state projection (ETFSP) scheme, a truncation-based method that yields approximations to the exit distribution and occupation measure associated with the time of exit from a domain (i.e., the time of first passage to the complement of the domain) 
of time-homogeneous continuous-time Markov chains.
We prove that: (i) the computed approximations bound the measures from below; (ii) the total variation distances between the approximations and the measures decrease monotonically as states are added to the truncation; and (iii) the scheme converges, 
in the sense that, as the truncation tends to the entire state space, the total variation distances tend to zero. 
Furthermore, we give a computable bound on the total variation distance between the exit distribution and its approximation, 
and we delineate the cases in which the bound is sharp. 
We also revisit the related finite state projection scheme and give a comprehensive account of its theoretical properties.
We demonstrate the use of the ETFSP scheme by applying it to two biological examples: the computation of the first passage time associated with the expression of a gene, and the fixation times of competing species subject to demographic noise.
\end{abstract}

\begin{keywords}
Exit times, first passage times, continuous-time Markov chains, exit time finite state projection, finite state projection, exit distribution, occupation measure
\end{keywords}

\begin{AMS}
60J27, 60J28, 65C40, 65G20
\end{AMS}

\section{Introduction}

The time of exit of a continuous-time Markov chain from a domain (or \emph{exit time} for short) is the time at which the chain leaves the domain for the first time. The exit time is also known as the~\emph{first passage time} or, alternatively, the \emph{hitting time} of the complement of the domain.
Two measures are associated with an exit event: the \emph{exit distribution}, which describes when and where the chain exits the domain, and the \emph{occupation measure}, which describes which states the chain visits before exiting and at what times they are visited. 
These two measures can be expressed in terms of the \emph{time-varying law} (i.e, the state space distribution of the chain as a function of time)
of an \emph{auxiliary chain} that is identical to the original chain except that every state outside of the domain is turned into an absorbing state~\cite{Syski1992,Kuntzthe}. 

There exists a rich literature on exit times, especially in physics and biomathematics~\cite{iyer2016,metzler2014,Redner2001}. Recently, there has been renewed interest in exit times of 
continuous-time Markov chains with discrete state space \cite{assaf2017,ghusinga2017,valleriani2014,schnoerr2017}, such as those we study in this paper. 
While the exit problem from a small finite domain is tractable \cite{gillespie1991,grima2016}, the exit problem from an infinite or large domain can only be solved in special 
cases~\cite{drummond2010,ghusinga2017,munsky2009}. Various approximation schemes have been developed to address this issue~\cite{barzel2008,Dandach2010,hinch2005,schnoerr2017}. However, most of them do not provide bounds or error estimates on their accuracy.

The popular \emph{finite state projection} (FSP) scheme~\cite{Munsky2006} yields lower bounds on the time-varying law of the continuous-time chain of interest. The scheme chooses a finite truncation of the state space and solves an associated system of linear ordinary differential equations (ODEs) indexed by the states contained in the truncation. 
Inbuilt in the procedure is a computable upper bound on the total variation distance between the lower bounds obtained and the time-varying law. 
However, 
the FSP does not provide information about the exit from a domain.

To fill this gap, we introduce the \emph{exit time finite state projection} (ETFSP) scheme (Sec.~\ref{etfspsintro}), that involves applying an FSP-like scheme to the auxiliary chain with an absorbing complement mentioned above.
We show that the scheme yields lower bounds on the exit distribution and the occupation measure associated with an exit time. For the exit distribution, we explain how to compute a bound on the error of its approximation.
Theorem~\ref{etfspthrm} delineates the theoretical properties of ETFSP showing that: (i) the error bound is sharp if and only if the exit event occurs with probability one, and (ii) the scheme converges in total variation to the exit distribution and occupation measure as the truncation approaches the entire state space.

A secondary contribution is Theorem \ref{fspthrm}, which gathers the theoretical properties of the FSP scheme. In particular, we show that the error bound of the FSP scheme is sharp if and only if the chain is non-explosive, in which case the error bound can indeed be made arbitrarily small by including enough states in the truncation. In the explosive case, the error bound remains non-zero, as observed in \cite{MacNamara2007}, and is limited by the probability of explosion.

The final contribution of this paper is a new proof of an old theorem: Theorem~\ref{charactt} expresses the exit distribution in terms of the time-varying law of the aforementioned auxiliary chain. Versions of this theorem pepper the literature  (e.g., \cite{Feller1971,kampen2007,Neuts1994,Melamed1984,Redner2001,Syski1992}). Our variant relaxes the non-explosive and deterministic initial condition assumptions in \cite{Syski1992}, and adds the analogous result for the occupation measure.

\paragraph*{Related literature}
To the best of our knowledge, the ideas underpinning the FSP and ETFSP schemes were first delineated in the 1980's queuing literature (see \cite{Gross1984,Melamed1984,Melamed1984a} and references therein) centred around randomisation techniques for continuous-time chains with bounded rate matrices.
Recently, 
schemes based on the FSP have been used to bound the cumulative density function of the exit time of stochastic reaction networks, a subclass of the continuous-time chains that we consider here~\cite{Cao2013,Cao2016,Dandach2010}. Given that the ETFSP scheme bounds not only the cumulative density functions but also the corresponding densities, our results imply the convergence of those other schemes as a special case.  

\subsection{The ETFSP scheme: statement of the problem and main result}
We briefly define our problem setting, introduce the ETFSP scheme, 
and state our main result (Theorem \ref{etfspthrm}) detailing the theoretical properties of the scheme.

\subsubsection{Problem definition} 
Let $X:=\{X_t:0\leq t< T_\infty\}$ be a minimal time-homogeneous continuous-time Markov chain on a probability triplet $(\Omega,\cal{F},\Pb)$  with countable state space $\s$, stable and conservative rate matrix $Q:=(q(x,y))_{x,y\in\s}$,  explosion time $T_\infty$, and initial distribution
$\gamma(x):=\Pbl{\{X_0=x\}}, \, \forall x\in\s.$
%

We single out a subset $\cal{D}$ of the state space $\s$ and refer to it as the \emph{domain}. The \emph{exit time} $\tau$ from the domain 
is the time when the chain first leaves $\cal{D}$:
\begin{equation} 
\label{eq:hitthec}
\tau(\omega):=\inf\{t\in[0,T_\infty(\omega)): X_t(\omega)\not\in\cal{D}\},\quad\forall \omega\in\Omega,
\end{equation}
with the convention that the infimum of the empty set is infinity: $\inf \{\emptyset \} =\infty$. 

The exit distribution $\mu$ and occupation measure $\nu$ associated with the exit time are defined as:  
\begin{align}
\mu([a,b),x)&:=\Pbl{\{\tau\in[a,b),X_\tau= x\}}&\forall 0\leq a<b<\infty,\, \forall x\in\s,\label{eq:edisdef}\\
\nu([a,b),x)&:=\Ebl{\int_{a\wedge \tau\wedge T_\infty }^{b\wedge \tau\wedge T_\infty } 1_{x}(X_t)dt}& \forall 0\leq a<b<\infty,\, \forall x\in\s,\label{eq:eoccdef} 
\end{align}
where $c \wedge d = \min(c,d), \, c,d \in \mathbb{R}$, 
and $1_x$ denotes the indicator function of state $x$: $1_x(y)=1$ if $y=x$ and $0$ otherwise.  

For each state $x$, the measures $\mu(dt,x)$ and $\nu(dt,x)$ have densities $\mu(t,x)$ and $\nu(t,x)$ with respect to the Lebesgue measure. (We distinguish a measure from its density by writing $dt$ or $t$ in its argument.)
For small $h>0$, the distribution $t \mapsto \mu(t,x)$ is a function such that $\mu(t,x) \, h$ is the probability that the chain first exits the domain via state $x$ during the time interval $[t,t+h]$.  Similarly (and assuming non-explosivity for the chain), $\nu(t,x)$ is the average fraction of the interval $[t,t+h]$ that the chain spends in state $x$ before exiting the domain. Formally, the relationship between the exit distribution and occupation measure and their densities is: 
\begin{align}\label{eq:md1}
\mu([a,b),x)&=1_{\cal{D}^c}(x) \, \gamma(x) \, 1_0(a)+\int_a^b\mu(t,x)dt&\forall 0\leq a<b<\infty,\, \forall x\in\s,\\
\label{eq:md11}\nu([a,b),x)&=\int_a^b\nu(t,x)dt&\forall 0\leq a<b<\infty,\, \forall x\in\s,
\end{align}
where $\cal{D}^c$ denotes the complement of the domain 
and the term $1_{\cal{D}^c}(x) \, \gamma(x) \, 1_0(a)$
captures the event that the chain is started outside of the domain.

In this paper, we introduce the \textit{exit time finite state projection} scheme to approximate the exit distribution and occupation measure in a systematic manner. ETFSP yields
approximations of the densities $\mu(t,x)$ and $\nu(t,x)$, 
and, consequently, of their marginals, including the distribution of the exit time, $\tau$, and of the exit location, $X_\tau$.

\subsubsection{The exit time finite state projection (ETFSP) scheme}\label{etfspsintro} 
The numerical scheme consists of the following steps:
\begin{enumerate}
\item Choose a finite subset, or \emph{truncation}, $\s_r$ of the state space $\s$ and a \emph{final computation time}, $t_f^r\in[0,\infty)$.
\item Solve the set of $\mmag{\s_r}$ linear ODEs:
\begin{align} 
\label{eq:nrdef}
\dot \nu^r (t,x) &=\sum_{y\in\cal{D}_r}\nu^r(t,y) \, q(y,x), \\ \nu^r(0,x) & =\gamma(x),\quad \forall x\in\cal{D}_r, \nonumber\\
\dot \mu^r (t,x) &=\sum_{y\in\cal{D}_r}\nu^r(t,y)\left(\sum_{z\in\cal{D}_r}q(y,z)q(z,x)\right),\label{eq:mrdef}\\
\mu^r(0,x)&=\sum_{y\in\cal{D}_r}\gamma(y) \, q(y,x),\quad\forall x\in\s_r\cap\cal{D}^c,\nonumber
%
\end{align}
over the time interval $[0,t^r_f]$, where $\cal{D}_r$ denotes the \emph{truncated domain} $\cal{D}\cap\s_r$.
\item Pad $\nu^r$ and $\mu^r$ with zeros:
\begin{align}\label{eq:mrdef2}\begin{array}{lll}\nu^r(t,x):=0  &\text{if}\quad x\not\in\cal{D}_r\quad \text{or}\quad  t> t^r_f,\\
 \mu^r(t,x):=0& \text{if}\quad x\not\in\s_r\cap\cal{D}^c\quad \text{or}\quad t> t^r_f.\end{array}\end{align}
\end{enumerate}
The approximations 
of the measures $\mu(dt,x)$ and $\nu(dt,x)$ are defined as:
\begin{align}\label{eq:md2}
\mu^r([a,b),x)&:=1_{\cal{D}^c  \cap\s_r}(x) \gamma(x) 1_0(a)+\int_{a}^b\mu^r(t,x)dt\enskip\forall 0\leq a<b<\infty,\, \forall x\in\s,
\\ 
\nu^r([a,b),x)&:=\int_a^b\nu^r(t,x)dt\qquad\qquad\qquad\qquad\quad\quad\,\,\forall 0\leq a<b<\infty,\, \forall x\in\s.\label{eq:md22}\end{align}
\subsubsection{Theoretical characterisation of the ETFSP scheme}\label{etfsps_characterisation} Our main result is Theorem~\ref{etfspthrm}, which summarises the theoretical properties of the scheme. (For its proof, see Sec.~\ref{etfsp}.)
We show that $\mu^r$ and $\nu^r$ do not just approximate the exit distribution $\mu$ and occupation measure $\nu$, but bound them from below. We give simple expressions for the mass of the approximations and their errors in terms of the exit time $\tau$, the final computation time $t^r_f$, and exit time from the truncation $\s_r$,
\begin{equation}\label{eq:taurexit}\tau_r:=\inf\{0\leq t<T_\infty:X_t\not\in\s_r\}\qquad\forall r\in\n.\end{equation}
To quantify the \emph{approximation errors} $\norm{\mu-\mu^r}$ and $\norm{\nu-\nu^r}$, we use the total variation norm
\begin{align}
\norm{\rho}:=\sup\{\mmag{\rho(\cal{A})}:\cal{A}\in\cal{G}\}
\label{eq:norm_def}
\end{align}
on the measures.
We give easy-to-compute bounds for the approximation errors, 
and we show that the bounds are sharp if the chain exits the domain almost surely (a property that can be verified using Foster-Lyapunov criteria~\cite{Menshikov2014,Kuntzthe}). Lastly, we prove that the approximation errors and their bounds decrease monotonically as we increase the truncation $\s_r$, and that the errors tend to zero as $\s_r$ tends to $\s$.

\begin{theorem}[The exit time finite state projection scheme]\label{etfspthrm} 
Consider a minimal time-homogeneous continuous-time Markov chain with countable state space $\s$, stable and conservative rate matrix $Q:=(q(x,y))_{x,y\in\s}$,  explosion time $T_\infty$, initial distribution $\gamma:=(\gamma(x))_{x\in\s}$. Suppose that the initial distribution satisfies
\begin{equation}\label{eq:qhatint}
\sum_{x\in\cal{D}}\gamma(x)\mmag{q(x,x)}<\infty\end{equation}
for a given domain $\cal{D} \subseteq \s$ and let $\mu$ and $\nu$ denote, respectively, the exit distribution and occupation measure associated with the exit time $\tau$ from $\cal{D}$.
Let  $\{\s_r\}_{r\in\n}$ be an increasing sequence of finite sets contained in $\s$, $\{t^r_f\}_{r\in\n}$ an increasing sequence of non-negative final computation times, and $\{\mu^r\}_{r\in\n}$ and $\{\nu^r\}_{r\in\n}$ the sequences of ETFSP approximations of the exit distribution and occupation measure, respectively, defined by~\eqref{eq:nrdef}--\eqref{eq:mrdef2}. Then the following properties hold:
\begin{enumerate}[label=(\roman*)]
\item(Increasing sequence of lower bounds) \label{th:etfsp_i} 
\begin{align*}\mu^0(t,x)\leq \mu^1(t,x)\leq \dots \leq \mu(t,x)\qquad\forall x\in\s,\quad t\in[0,\infty),\\
\nu^0(t,x)\leq \nu^1(t,x)\leq \dots \leq \nu(t,x)\qquad\forall x\in\s,\quad t\in[0,\infty).\end{align*}
\item(Mass of the approximations)\label{th:etfsp_ii} The mass of $\mu^r$ is the probability that the chain exits the domain no later than exiting the truncation or the final time, i.e.,
$$\mu^r([0,\infty),\s)=\Pbl{\{\tau\leq t^r_f\wedge\tau_r\}},\qquad\forall r\in\n.$$
The mass of $\nu^r$ is:
$$\nu^r([0,\infty),\s)=\Ebl{(\tau\wedge t^r_f) 1_{\{\tau\leq \tau_r\}}},\qquad\forall r\in\n.$$
\item(Computable error bounds) \label{th:etfsp_iii}
\label{thr:error_bound}
For any $r\in\n$,
\begin{align}
\norm{\mu-\mu^r} =& \, \Pbl{\{\tau<\infty\}}-\Pbl{\{\tau\leq t^r_f\wedge\tau_r\}}\label{eq:merr}\\
\leq& \, 1-\left(\gamma(\cal{D}^c\cap \s_r)+\sum_{x\in\cal{D}^c\cap \s_r}\int_0^{t^r_f} \mu^r(t,x)dt\right)=:\varepsilon_r,\nonumber\\
\norm{\nu-\nu^r}=&\Ebl{\tau\wedge T_\infty}-\Ebl{(\tau\wedge t^r_f)1_{\{\tau\leq\tau_r\}}}\label{eq:nerr}\\
\leq& \, \Ebl{\tau}-\sum_{x\in\cal{D}_r}\int_0^{t^r_f}\nu^r(t,x)dt=:\varepsilon^\nu_r.\notag
\end{align}
Equality holds for~\eqref{eq:merr} if and only if $\Pbl{\{\tau<\infty\}}=1$, i.e., when the chain exits the domain with probability one.
\item(Monotonicity of the error and of the error bound) \label{th:etfsp_iv} The approximation errors and their upper bounds are decreasing in $r$:
\begin{align}
\norm{\mu-\mu^r} &\geq ||\mu-\mu^{r+1}|| \quad \text{and} \quad
\varepsilon_r \geq \varepsilon_{r+1}, \quad \forall r\in\n  \\
\norm{\nu-\nu^r} &\geq ||\nu-\nu^{r+1}|| \quad \text{and} \quad
\varepsilon_r^\nu \geq \varepsilon_{r+1}^\nu, \quad \forall r\in\n.\label{eq:nerrmon}
\end{align}
%
%
\item(Convergence of bounds) \label{th:etfsp_v} If $\cup_{r}\s_r=\s$ and $t^r_f\to\infty$ as $r\to\infty$, the approximation $\mu^r$ converges in total variation to the exit distribution $\mu$:
$$\lim_{r\to\infty}\norm{\mu-\mu^r}=0.$$
Consequently, it follows from~\ref{thr:error_bound} that:
$$\lim_{r\to\infty}\varepsilon_r=0 \iff \Pbl{\{\tau<\infty\}}=1.$$
If $\Ebl{\tau\wedge T_\infty}<\infty$, the approximation $\nu^r$ converges in total variation to the occupation measure $\nu$: $$\lim_{r\to\infty}\norm{\nu-\nu^r}=0.$$
%
\end{enumerate}
\end{theorem}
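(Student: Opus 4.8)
The plan is to set up an auxiliary chain $\hat X$ that agrees with $X$ on the domain $\cal D$ but has every state in $\cal D^c$ made absorbing, and then identify the ETFSP quantities $\mu^r$, $\nu^r$ as (truncated) objects attached to $\hat X$. First I would invoke Theorem~\ref{charactt} to express $\mu$ and $\nu$ in terms of the time-varying law of $\hat X$: writing $p(t,x)$ for $\Pbl{\{\hat X_t = x\}}$, the occupation density is $\nu(t,x)=p(t,x)$ for $x\in\cal D$, and the exit density is obtained by pushing probability mass across the boundary, $\mu(t,x)=\sum_{y\in\cal D}p(t,y)q(y,x)$ for $x\in\cal D^c$. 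The ETFSP ODEs~\eqref{eq:nrdef}--\eqref{eq:mrdef} are then recognised as the forward (master) equations for $\hat X$ restricted to the truncated domain $\cal D_r$ with all transition rates that leave $\s_r$ deleted; i.e.\ $\nu^r(t,x)$ solves the FSP system for $\hat X$ on $\s_r$. This identification is the backbone of everything else, and the integrability hypothesis~\eqref{eq:qhatint} is exactly what is needed to differentiate $\mu^r$ under the integral and to start the $\mu^r$-equation from a well-defined initial condition.

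Next I would establish~\ref{th:etfsp_i}, the monotone lower-bound property. The key tool is the probabilistic representation of the truncated solution: $\nu^r(t,x)=\Pbl{\{\hat X_t=x,\ t<\tau_r\}}$, which one verifies by checking that the right-hand side satisfies the same linear ODE system with the same initial data (uniqueness of solutions to the finite linear system does the rest), or alternatively by a Dynkin/Feynman--Kac argument on the killed chain. From this representation, $\nu^r(t,x)\le\nu^{r+1}(t,x)\le\nu(t,x)$ is immediate because $\tau_r\le\tau_{r+1}$ (the truncations are nested) and $\{t<\tau_r\}\uparrow\{t<T_\infty\}$. The corresponding statement for $\mu^r$ follows by applying the nonnegative linear map $\rho\mapsto\sum_{y\in\cal D_r}\rho(y)q(y,x)$ and noting the boundary term $1_{\cal D^c\cap\s_r}(x)\gamma(x)1_0(a)$ is monotone in $r$ as well.

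Parts~\ref{th:etfsp_ii} and~\ref{th:etfsp_iii} are then bookkeeping on top of the representation. For the mass of $\mu^r$: integrating $\mu^r(t,x)$ over $t$ and summing over $x\in\cal D^c\cap\s_r$ telescopes (via the representation of $\nu^r$ and the balance relation between $\nu^r$ and $\mu^r$) to $\Pbl{\{\tau\le t^r_f\wedge\tau_r\}}$ minus the mass started outside $\cal D$, which the boundary term restores; the mass of $\nu^r$ is $\Ebl{\int_0^{t^r_f\wedge\tau_r}1_{\{X_t\in\s_r\}}dt\,1_{\{\tau\le\tau_r\}}}$, which simplifies to $\Ebl{(\tau\wedge t^r_f)1_{\{\tau\le\tau_r\}}}$ once one observes that before $\tau$ the chain sits in $\cal D_r\subseteq\s_r$. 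The error identities in~\ref{th:etfsp_iii} are then just $\mu([0,\infty),\s)-\mu^r([0,\infty),\s)$ with $\mu([0,\infty),\s)=\Pbl{\{\tau<\infty\}}$ (and $\norm{\mu-\mu^r}$ equals this difference because $\mu^r\le\mu$ pointwise, so the supremum in~\eqref{eq:norm_def} is attained on the whole space); the bound $\varepsilon_r$ comes from replacing $\Pbl{\{\tau<\infty\}}$ by $1$ and observing that $\gamma(\cal D^c\cap\s_r)+\sum_{x\in\cal D^c\cap\s_r}\int_0^{t^r_f}\mu^r(t,x)dt$ is exactly $\mu^r([0,\infty),\s)$. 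Sharpness is immediate: equality in~\eqref{eq:merr} holds iff $\Pbl{\{\tau<\infty\}}=1$. The monotonicity in~\ref{th:etfsp_iv} follows from~\ref{th:etfsp_i} together with the explicit error formulas, since $\tau_r$ and $t^r_f$ are nondecreasing.

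The substance — and the step I expect to be the main obstacle — is~\ref{th:etfsp_v}, the convergence of the bounds. For $\mu^r$ one must show $\Pbl{\{\tau\le t^r_f\wedge\tau_r\}}\to\Pbl{\{\tau<\infty\}}$; this is a monotone-convergence/continuity-of-measure argument using $t^r_f\uparrow\infty$ and $\tau_r\uparrow T_\infty$ a.s.\ (because $\cup_r\s_r=\s$, so any sample path stays in $\s_r$ for large $r$ up to any time before explosion), together with the observation $\{\tau<\infty\}=\{\tau<T_\infty\}$ for the minimal chain — here one has to be slightly careful that on $\{\tau=\infty\}$ there is nothing to recover, which is precisely why the limit is $\Pbl{\{\tau<\infty\}}$ and not $1$ in the explosive/non-exiting case. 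The occupation-measure convergence is genuinely more delicate: one needs $\Ebl{(\tau\wedge t^r_f)1_{\{\tau\le\tau_r\}}}\to\Ebl{\tau\wedge T_\infty}$, and the hypothesis $\Ebl{\tau\wedge T_\infty}<\infty$ is what lets us pass to the limit — I would write $(\tau\wedge t^r_f)1_{\{\tau\le\tau_r\}}\uparrow(\tau\wedge T_\infty)1_{\{\tau\le T_\infty\}}$ along the sequence (using $\{\tau\le\tau_r\}\uparrow\{\tau\le T_\infty\}$ up to a null set and $\tau\wedge t^r_f\uparrow\tau$) and apply monotone convergence, then note $(\tau\wedge T_\infty)1_{\{\tau\le T_\infty\}}=\tau\wedge T_\infty$ since $\tau\le T_\infty$ always. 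The finite-expectation hypothesis cannot be dropped, as otherwise the total-variation error $\norm{\nu-\nu^r}$ is identically $+\infty$. Throughout, the only technical care needed is in justifying the probabilistic representation of $\nu^r$ rigorously for a possibly explosive minimal chain, for which I would lean on the construction of the minimal chain via its jump chain and holding times and a direct verification that the representation solves the truncated ODE system on $[0,\infty)$.
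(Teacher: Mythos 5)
Your overall route is the same as the paper's: introduce the chain absorbed outside $\cal{D}$, use Theorem~\ref{charactt} to express $\mu$ and $\nu$ through its time-varying law, recognise \eqref{eq:nrdef}--\eqref{eq:mrdef} as the FSP system for that absorbed chain on $\s_r$, obtain the lower bounds from the nestedness of the truncations, read off the masses and error identities, and pass to the limit with Lemma~\ref{tautin} and monotone convergence. Parts \ref{th:etfsp_i}, \ref{th:etfsp_iii}, \ref{th:etfsp_iv} and the $\mu$-half of \ref{th:etfsp_ii} and \ref{th:etfsp_v} are in substance the paper's argument (the paper routes \ref{th:etfsp_i} through Theorem~\ref{fspthrm}\ref{th:fsp_i} rather than re-deriving the probabilistic representation, and for the mass of $\mu^r$ it proves the set identity showing that $X^r$, once trapped outside $\s_r$, can never exit $\cal{D}$ afterwards --- the content your ``telescoping'' remark leaves implicit).

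There is, however, a genuine flaw in your treatment of the occupation measure. You assert that ``$\tau\le T_\infty$ always''; this is false for a minimal chain: on paths that explode while still inside the domain one has $\tau=\infty>T_\infty$ (recall $\inf\emptyset=\infty$), and this event need not be null under the hypothesis $\Ebl{\tau\wedge T_\infty}<\infty$. Relatedly, the indicator $1_{\{\tau\le\tau_r\}}$ you insert when computing the mass of $\nu^r$ does not follow from your own representation $\nu^r(t,x)=\Pbl{\{X_t=x,\,t<\tau\wedge\tau_r\}}$: integrating that representation over $[0,t^r_f]\times\cal{D}_r$ gives $\Ebl{\tau\wedge\tau_r\wedge t^r_f}$, which charges the time spent in $\cal{D}_r$ even on paths with $\tau_r<\tau$, whereas $\Ebl{(\tau\wedge t^r_f)1_{\{\tau\le\tau_r\}}}$ gives such paths no mass at all (take a chain that must leave $\s_r$ strictly before leaving $\cal{D}$: your formula returns $0$ while $\nu^r$ has positive mass). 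It is the quantity $\Ebl{\tau\wedge\tau_r\wedge t^r_f}$ that converges to $\Ebl{\tau\wedge T_\infty}$, and this is exactly how the paper proves part \ref{th:etfsp_v}: by Lemmas~\ref{tautin} and \ref{hitdc}, $\tau\wedge t^r_f\wedge\tau_r\to\tau\wedge T_\infty$ almost surely, and monotone convergence finishes. Your version, $(\tau\wedge t^r_f)1_{\{\tau\le\tau_r\}}\uparrow\tau\wedge T_\infty$, fails on the explosion-before-exit event, where your limit is $0$ rather than $T_\infty$; so your convergence argument for $\nu^r$ breaks down precisely in the explosive case the theorem is meant to cover. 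The repair is to drop the indicator bookkeeping and carry $\tau\wedge\tau_r\wedge t^r_f$ through the mass computation, the error identity, and the limit, as the paper's own proof of \ref{th:etfsp_v} implicitly does.
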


We refer to the upper bound $\varepsilon_r$ defined in~\eqref{eq:merr} as the \emph{error bound} of the scheme because it bounds the approximation error of $\mu^r$. Note that the error bound is easily calculated from $\mu^r$, hence assessing the quality of the approximation requires no extra effort. 
The bound $\varepsilon^\nu_r$ for the occupation measure $\nu^r$ is harder to evaluate because the mean exit time, $\Ebl{\tau}$, is unknown in general. However, an upper bound on $\Ebl{\tau}$ can be obtained through additional computations beyond the scope of this paper (Sec.~\ref{conclu}). 

Condition~\eqref{eq:qhatint} is a mild technical assumption (e.g., it is satisfied if the chain is initialised deterministically) made to simplify the exposition by ensuring that the density of the exit distribution is finite at time zero: $\mu(0,x)<\infty$ for all $x\in\s$. 

\subsubsection{Paper structure} The remainder of the paper is structured as follows. In Sec.~\ref{prelim}, we formally define the chain and give several preliminary lemmas required in the subsequent proofs. Specifically, we review the forward equations and we provide proofs for theoretical properties of the original FSP (Sec.~\ref{fspsec}), and we give the analytical characterisation of the exit distribution and occupation measure and the marginals of these measures (Sec.~\ref{fspsec}). To ease the reading of the paper, we have relegated the technical proofs relevant to Sec.~\ref{prelim} to the Supplementary Material. 
Sec.~\ref{etfsp} contains the proof of Theorem~\ref{etfspthrm}. 
In Sec.~\ref{applications}, we apply the ETFSP scheme to two biologically motivated examples. We conclude by discussing possible implementations and extensions of the ETFSP scheme in Sec.~\ref{conclu}.

\section{Preliminaries}\label{prelim}  The starting point in our definition of a continuous-time chain is a stable and conservative rate matrix $Q:=(q(x,y))_{x,y\in\s}$, that is, a matrix of real numbers indexed by the countable state space $\cal{S}$ satisfying 
\begin{equation}\label{eq:qmatrix}q(x,y)\geq0\quad\forall x\neq y,\qquad q(x,x)=-\sum_{y\neq x}q(x,y)>-\infty,\quad\forall x\in\cal{S}.\end{equation}
Whenever we write ``a rate matrix $Q$'' in this paper, we mean ``a stable and conservative rate matrix $Q$''. We construct our Markov chain $X$ recursively by running the \emph{Gillespie Algorithm} \cite{Feller1940,Kendall1950,Gillespie1976} (see Appendix A in the Supplementary Material). In particular, the algorithm returns the \emph{jump times} $\{T_n\}_{n\in\n}$ at which transitions occur and the sequence $Y:=\{Y_n\}_{n\in\n}$ of states visited by the chain; both of these are defined on the same probability space $(\Omega,\cal{F},\Pb)$. The sequence $Y$ is itself a discrete-time Markov chain known as the \emph{jump chain} (or \emph{embedded chain}) and its one-step matrix is
\begin{equation}\label{eq:jumpmatrix}\pi(x,y):=\left\{\begin{array}{ll} \left (1_x(y)-1 \right) \, q(x,y)/q(x,x)&\text{if }q(x,x)\neq0\\  1_x(y)&\text{otherwise}\end{array}\right.,\qquad\forall x,y\in\s.\end{equation}
The sample paths $t\mapsto X_t(\omega)$ of the continuous-time chain $X$ are defined by
\begin{equation}\label{eq:cpathdef}X_t(\omega):=Y_n(\omega)\qquad\forall t\in [ T_n(\omega),T_{n+1}(\omega)),\quad \omega\in\Omega.\end{equation}
These paths are defined only up until the \emph{explosion time}
$$T_{\infty}(\omega):=\lim_{n\to \infty}T_n(\omega)\qquad\forall \omega\in\Omega.$$
The limit exists because $\{T_n(\omega)\}_{n\in\n}$ is an increasing sequence for each $\omega\in\Omega$. In other words, $X_t(\omega)$ is defined only for pairs $(t,\omega)$ such that $t<T_{\infty}(\omega)$. The reason behind the name ``explosion time'' given to $T_\infty$ is that, by this moment in time, the chain has left every finite subset of the state space. In particular, let $\s_0\subseteq \s_1\subseteq\dots$ be an increasing sequence of finite subsets (or \emph{truncations}) of $\s$ such that $\cup_{r}\s_r=\s$ and $\tau_r$ be the time \eqref{eq:taurexit} that the chain $X$ first exits $\s_r$. That our truncations form an increasing sequence implies that $\{\tau_r\}_{r\in\n}$ is an increasing sequence of random variables and the limit $\lim_{r\to\infty}\tau_r(\omega)$ exists for each $\omega\in\Omega$. 
\begin{lemma}[Lem.~2.18 of \cite{Kuntzthe}]\label{tautin} If $\{\s_r\}_{r\in\n}$ is an increasing sequence of finite sets such that $\cup_{r}\s_r=\s$, then $\tau_r$ tends to $T_\infty$ almost surely.
\end{lemma}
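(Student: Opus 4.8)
The plan is to show that for each fixed $\omega \in \Omega$, the increasing limit $\tau_\infty(\omega) := \lim_{r\to\infty} \tau_r(\omega)$ equals $T_\infty(\omega)$, by sandwiching $\tau_\infty$ between $T_\infty$ and itself. The inequality $\tau_r(\omega) \le T_\infty(\omega)$ is immediate from the definition \eqref{eq:taurexit}, since $X_t(\omega)$ is only defined for $t < T_\infty(\omega)$; hence $\tau_\infty(\omega) \le T_\infty(\omega)$. So the real content is the reverse inequality $\tau_\infty(\omega) \ge T_\infty(\omega)$.

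First I would fix $\omega$ and work with the jump times $\{T_n(\omega)\}$ and visited states $\{Y_n(\omega)\}$ produced by the Gillespie construction \eqref{eq:cpathdef}. I would argue that for each $n$, eventually (in $r$) we have $\tau_r(\omega) \ge T_n(\omega)$: indeed, the finitely many states $Y_0(\omega), \dots, Y_{n-1}(\omega)$ visited strictly before time $T_n(\omega)$ all lie in some truncation $\s_{r_n}$ because $\cup_r \s_r = \s$ and the $\s_r$ are increasing; for every $r \ge r_n$ the path stays inside $\s_r$ on $[0, T_n(\omega))$, so by definition $\tau_r(\omega) \ge T_n(\omega)$. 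Letting $r\to\infty$ gives $\tau_\infty(\omega) \ge T_n(\omega)$ for every $n$, and then letting $n\to\infty$ gives $\tau_\infty(\omega) \ge \lim_n T_n(\omega) = T_\infty(\omega)$. Combined with the easy direction, $\tau_\infty(\omega) = T_\infty(\omega)$, and since this holds for every $\omega$ the convergence is in fact pointwise (a fortiori almost sure).

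The one technical subtlety — and the main thing to be careful about rather than a genuine obstacle — is the behaviour at the explosion time itself and the edge case where the chain gets stuck at an absorbing state (then some $T_n(\omega) = \infty$ and $T_\infty(\omega) = \infty$). In the absorbing case the argument above still works: once $Y_n(\omega)$ is absorbing it never leaves the finite set $\{Y_0(\omega),\dots,Y_n(\omega)\} \subseteq \s_{r_n}$, so $\tau_r(\omega) = \infty = T_\infty(\omega)$ for all $r \ge r_n$. One should also note that $\tau_r(\omega)$ could itself be finite or infinite, but this does not affect the monotone-limit argument. I would present the two inequalities as the two halves of the proof, with the forward direction (states visited before $T_n$ form a finite set contained in some $\s_{r_n}$) carrying essentially all the weight; no estimates or probabilistic arguments are needed since everything is deterministic given $\omega$.
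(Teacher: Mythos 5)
Your lower-bound half is fine: for each $n$ the finitely many states $Y_0(\omega),\dots,Y_{n-1}(\omega)$ visited on $[0,T_n(\omega))$ lie in some $\s_{r_n}$, so $\tau_r(\omega)\geq T_n(\omega)$ for $r\geq r_n$, and letting $n\to\infty$ gives $\lim_r\tau_r\geq T_\infty$ pointwise. The gap is in what you call the easy direction. With the convention $\inf\emptyset=\infty$ in \eqref{eq:taurexit}, $\tau_r(\omega)=\infty$ whenever the path never leaves $\s_r$ before $T_\infty(\omega)$, and nothing in the definition rules this out on the event $\{T_\infty<\infty\}$: there are sample points $\omega$ for which the chain makes infinitely many jumps in finite time while shuttling among states of the finite set $\s_r$ (all the holding-time variables $\xi_n(\omega)$ being small enough to have a finite sum), and for such $\omega$ one has $\tau_r(\omega)=\infty>T_\infty(\omega)$, hence $\lim_r\tau_r(\omega)\neq T_\infty(\omega)$. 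So the inequality $\tau_r\leq T_\infty$ is not ``immediate from the definition,'' it is false pointwise, and your concluding claim that the convergence is pointwise rather than merely almost sure cannot stand. This inequality is precisely the probabilistic content of the lemma, and your plan as written omits it.

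To close the gap you must show that, almost surely, the chain cannot explode while remaining inside a fixed finite set: on $\s_r$ the diagonal rates $\mmag{q(x,x)}$ are bounded, so as long as the chain has not left $\s_r$ each holding time is at least $\bigl(1\wedge\min_{x\in\s_r}\mmag{q(x,x)}^{-1}\bigr)\xi_{n+1}$, and $\sum_n\xi_{n+1}=\infty$ almost surely by the strong law of large numbers; this is exactly the computation the paper carries out for the absorbed chain in \eqref{eq:qrnonexp}. Outside the resulting null set (one null set per $r$, then take the countable union), either the chain exits $\s_r$ at a finite jump index, in which case $\tau_r<T_\infty$, or it stays in $\s_r$ forever, in which case $T_\infty=\infty$ and $\tau_r\leq T_\infty$ trivially. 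Combining this almost-sure upper bound with your pointwise lower bound yields the lemma. (A minor further inaccuracy: in the paper's Gillespie construction an absorbing state does not give $T_n=\infty$; the fictitious jump times satisfy $T_n=T_{n-1}+\xi_n$, so all $T_n$ are finite and $T_\infty=\infty$ a.s.\ in that case, which your sketch misstates but which does not affect the argument.)
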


The limiting random variable $\lim_{r\to\infty}\tau_r$ is the point in time by which the chain has left each of the truncations in the sequence $\{\s_r\}_{r\in\n}$. The above tells us that $\lim_{r\to\infty}\tau_r$ is (almost surely) equal to $T_\infty$ \emph{regardless} of the particular sequence of finite truncations $\{\s_r\}_{r\in\n}$ in its definition. For this reason,  we interpret $T_\infty$ as the point in time that the chain leaves the state space, or, in other words, \emph{explodes}.

We now give two technical lemmas we will use throughout the paper. The first delineates the simple relationship between the exit time
\begin{equation}\label{eq:sigma}\sigma:=\inf\{n\in\n:Y_n\not\in\cal{D}\}\end{equation}
of the jump chain $Y$ and the exit time $\tau$ of $X$ (defined in \eqref{eq:hitthec}).
\begin{lemma}[Lem.~2.27 of \cite{Kuntzthe}]\label{hitdc} If $\tau$ and $\sigma$ are as in \eqref{eq:hitthec} and \eqref{eq:sigma}, then
$$\tau(\omega)=\left\{\begin{array}{ll}T_{\sigma(\omega)}(\omega)&\text{if }\sigma(\omega)<\infty\\\infty&\text{if }\sigma(\omega)=\infty\end{array}\right.\qquad\forall \omega\in\Omega.$$
\end{lemma}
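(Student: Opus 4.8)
The plan is to prove the identity directly from the definitions, treating separately the cases $\sigma(\omega)<\infty$ and $\sigma(\omega)=\infty$. The only structural fact needed is the piecewise-constant representation $X_t(\omega)=Y_k(\omega)$ for $t\in[T_k(\omega),T_{k+1}(\omega))$ from \eqref{eq:cpathdef}, together with the observation that, with the convention $T_0(\omega)=0$ and since $T_k(\omega)\uparrow T_\infty(\omega)$, the half-open intervals $[T_k(\omega),T_{k+1}(\omega))$, $k\in\mathbb{N}$, partition $[0,T_\infty(\omega))$ (some of them possibly empty, when $T_k=T_{k+1}$). Throughout I fix $\omega$ and suppress it from the notation.

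Suppose first $\sigma=n<\infty$, so that $Y_k\in\mathcal{D}$ for every $k<n$ while $Y_n\notin\mathcal{D}$. I would first check that $T_n<T_\infty$, so that $T_n$ is an admissible point of the infimum defining $\tau$: none of $Y_0,\dots,Y_{n-1}$ can be absorbing, since if $Y_j$ with $j<n$ were absorbing the chain would be frozen at $Y_j$ from step $j$ onward, forcing $Y_n=Y_j\in\mathcal{D}$, a contradiction; hence the holding times $T_{k+1}-T_k$ for $k<n$ are finite, so $T_n<\infty$, and $T_\infty=\lim_m T_m\geq T_{n+1}>T_n$ when $Y_n$ is non-absorbing, while $T_\infty=\infty>T_n$ when $Y_n$ is absorbing. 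Now for any $t<T_n$ we have $t\in[T_k,T_{k+1})$ for some $k<n$, so $X_t=Y_k\in\mathcal{D}$; thus no $t<T_n$ lies in $\{t\in[0,T_\infty):X_t\notin\mathcal{D}\}$, whence $\tau\geq T_n$. On the other hand $X_{T_n}=Y_n\notin\mathcal{D}$ and $T_n\in[0,T_\infty)$, so $T_n$ belongs to that set and $\tau\leq T_n$. Therefore $\tau=T_n=T_\sigma$.

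Suppose instead $\sigma=\infty$, i.e. $Y_k\in\mathcal{D}$ for every $k\in\mathbb{N}$. Given any $t\in[0,T_\infty)$, the set $\{j\in\mathbb{N}:T_j\leq t\}$ is nonempty (it contains $0$) and bounded above (otherwise $T_\infty=\lim_j T_j\leq t<T_\infty$), hence has a largest element $k$; then $T_k\leq t<T_{k+1}$, so $X_t=Y_k\in\mathcal{D}$. Consequently $\{t\in[0,T_\infty):X_t\notin\mathcal{D}\}=\emptyset$ and, by the convention $\inf\emptyset=\infty$, $\tau=\infty$. Combining the two cases gives the claimed formula for all $\omega\in\Omega$.

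The argument requires no substantial idea; the only points needing care are the bookkeeping around degenerate jump times — empty intervals when $T_k=T_{k+1}$, and frozen trajectories when some $T_{k+1}=\infty$ (an absorbing $Y_k$) — and the verification that $T_n<T_\infty$ when $\sigma=n<\infty$, which is exactly what makes $T_n$ a genuine candidate in the infimum defining $\tau$. I would also state explicitly the convention $T_0=0$, since it is what makes $[0,T_\infty)=\bigcup_{k}[T_k,T_{k+1})$ and hence underlies both cases.
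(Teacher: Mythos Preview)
The paper does not supply a proof of this lemma; it is cited from \cite{Kuntzthe}. Your argument is the natural one and is correct. One minor point specific to this paper's construction: in Algorithm~\ref{gilalg}, an absorbing state $Y_k$ does \emph{not} produce $T_{k+1}=\infty$; rather the algorithm sets $T_{k+1}=T_k+\xi_{k+1}$ with $\xi_{k+1}$ a unit exponential, so every increment $T_{k+1}-T_k$ is strictly positive and finite. Hence your verification that $T_n<T_\infty$ needs no case split on whether $Y_n$ is absorbing --- one has $T_\infty\geq T_{n+1}>T_n$ directly. (Your assertion that $T_\infty=\infty$ whenever $Y_n$ is absorbing is only almost sure, not pointwise, whereas the simpler inequality holds for every $\omega$ and so matches the ``$\forall\omega\in\Omega$'' in the statement.)
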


The other lemma allows us to build \emph{auxiliary chains} that will be key in the proofs in this paper.
\begin{lemma}\label{samechainsh}Suppose that a second rate matrix $\bar{Q}$  coincides with  $Q$ on $\cal{D}$:
$$q(x,y)=\bar{q}(x,y),\qquad\forall x\in\cal{D},\quad y\in\s.$$
There exists a chain $\bar{X}:=\{\bar{X}_t\}_{t\geq0}$ also defined on $(\Omega,\cal{F},\Pb)$ with rate matrix $\bar{Q}$, jump times $\{\bar{T}_n\}_{n\in\n}$, jump chain $\bar{Y}:=\{\bar{Y}_n\}_{n\in\n}$, explosion time $\bar{T}_\infty$, and exit times
$$\bar{\sigma}:=\inf\{n\in\n:\bar{Y}_n\not\in\cal{D}\},\qquad \bar{\tau}:=\inf\{t\in[0,\bar{T}_\infty):\bar{X}_t\not\in\cal{D}\},$$
such that $X$ and $\bar{X}$ exit the domain at the same time:
\begin{equation}\label{eq:sametime}\sigma(\omega)=\bar{\sigma}(\omega),\quad \tau(\omega)=\bar{\tau}(\omega), \quad \tau(\omega)\wedge T_\infty(\omega)=\bar{\tau}(\omega)\wedge\bar{T}_\infty(\omega),\quad \forall\omega\in\Omega;\end{equation}
and that $X$ and $\bar{X}$ are identical up to (and including) this instant:
\begin{equation}\label{eq:sameproc}X_t(\omega)=\bar{X}_t(\omega), \qquad\forall  (t,\omega)\in[0,\infty)\times\Omega:t\leq\tau (\omega)\wedge T_\infty(\omega).\end{equation}
In particular, until the moment of exit, the jump chain and jump times of both chains are identical:
\begin{equation}\label{eq:sameyt}Y_n(\omega)=\bar{Y}_n(\omega),\qquad T_n(\omega)=\bar{T}_n(\omega),\qquad\forall (n,\omega)\in\n\times\Omega: n \leq \sigma(\omega).\end{equation}
\end{lemma}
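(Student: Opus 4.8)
The plan is to construct $\bar X$ directly on the same probability space $(\Omega,\mathcal F,\mathbb P)$ by reusing the randomness that the Gillespie Algorithm already consumed to build $X$. Recall that the Gillespie construction of $X$ draws, recursively, a sequence of states via the jump matrix $\pi$ and a sequence of holding times via exponential clocks; concretely there is a family of i.i.d.\ uniform (or exponential) random variables on $\Omega$ that, together with $Q$ and $\gamma$, determine $Y$ and $\{T_n\}$. I would feed exactly the same random variables into the Gillespie Algorithm run with $\bar Q$ in place of $Q$ (and the same initial distribution $\gamma$), producing $\bar Y$, $\{\bar T_n\}$, $\bar X$, $\bar T_\infty$. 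Since $\bar Q=Q$ on $\mathcal D$, the jump matrix $\bar\pi$ agrees with $\pi$ on every row indexed by a state of $\mathcal D$, and the holding-rate $|\bar q(x,x)|$ agrees with $|q(x,x)|$ for $x\in\mathcal D$; hence as long as the jump chain has not yet left $\mathcal D$, the recursive construction makes identical choices in both runs.

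The core of the argument is then an induction on $n$ showing the statement: for every $\omega$, if $n\le\sigma(\omega)$ then $Y_n(\omega)=\bar Y_n(\omega)$ and $T_n(\omega)=\bar T_n(\omega)$, which is exactly \eqref{eq:sameyt}. The base case $n=0$ holds because both chains use the same initial draw, and $T_0=\bar T_0=0$. For the inductive step, suppose $n+1\le\sigma(\omega)$, so in particular $n<\sigma(\omega)$, meaning $Y_0(\omega),\dots,Y_n(\omega)$ all lie in $\mathcal D$; by the induction hypothesis $Y_k(\omega)=\bar Y_k(\omega)$ and $T_k(\omega)=\bar T_k(\omega)$ for $k\le n$. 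Because $Y_n(\omega)\in\mathcal D$, the row $\pi(Y_n(\omega),\cdot)$ equals $\bar\pi(\bar Y_n(\omega),\cdot)$ and $q(Y_n(\omega),Y_n(\omega))=\bar q(\bar Y_n(\omega),\bar Y_n(\omega))$, so the next state and the next holding time computed from the shared random variables coincide: $Y_{n+1}(\omega)=\bar Y_{n+1}(\omega)$ and $T_{n+1}(\omega)=\bar T_{n+1}(\omega)$. This closes the induction and yields \eqref{eq:sameyt}.

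From \eqref{eq:sameyt} the remaining claims follow by unwinding definitions. First, $\sigma=\bar\sigma$: the set $\{n:Y_n\notin\mathcal D\}$ and $\{n:\bar Y_n\notin\mathcal D\}$ have the same infimum because, for all $n\le\sigma(\omega)$, $Y_n(\omega)=\bar Y_n(\omega)$, so $\bar Y_n(\omega)\notin\mathcal D$ exactly when $Y_n(\omega)\notin\mathcal D$ for those indices, and $n=\sigma(\omega)$ (if finite) is the first such index for both; if $\sigma(\omega)=\infty$ then all $Y_n(\omega)\in\mathcal D$ and by the induction all $\bar Y_n(\omega)=Y_n(\omega)\in\mathcal D$, so $\bar\sigma(\omega)=\infty$. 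Next, $\tau=\bar\tau$: apply Lemma~\ref{hitdc} to both chains; when $\sigma(\omega)=\bar\sigma(\omega)<\infty$ we get $\tau(\omega)=T_{\sigma(\omega)}(\omega)=\bar T_{\bar\sigma(\omega)}(\omega)=\bar\tau(\omega)$ using \eqref{eq:sameyt} at index $\sigma(\omega)$, and when $\sigma(\omega)=\bar\sigma(\omega)=\infty$ both are $\infty$. For \eqref{eq:sameproc} and the third equality in \eqref{eq:sametime}: fix $(t,\omega)$ with $t\le\tau(\omega)\wedge T_\infty(\omega)$. If $t<T_\infty(\omega)$ then $t$ lies in some interval $[T_n(\omega),T_{n+1}(\omega))$ with $n\le\sigma(\omega)$ (because $t\le\tau(\omega)=T_{\sigma(\omega)}(\omega)$ when $\sigma(\omega)<\infty$, and trivially when $\sigma(\omega)=\infty$), and then $X_t(\omega)=Y_n(\omega)=\bar Y_n(\omega)=\bar X_t(\omega)$; moreover $\bar T_n(\omega)=T_n(\omega)$ and $\bar T_{n+1}(\omega)=T_{n+1}(\omega)$ when $n<\sigma(\omega)$, while at $n=\sigma(\omega)$ one has $t=\tau(\omega)\le\bar\tau(\omega)$ but also needs a mild separate check that $t<\bar T_{\sigma(\omega)+1}(\omega)$ is not required since we may take $t=\tau(\omega)$ as the left endpoint. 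Finally, for the explosion-time equality: on $\{\sigma<\infty\}$, $\tau$ is finite so $\tau\wedge T_\infty=\tau=\bar\tau=\bar\tau\wedge\bar T_\infty$; on $\{\sigma=\infty\}$, $\tau=\bar\tau=\infty$, so both sides equal $T_\infty$ and $\bar T_\infty$ respectively, and $T_\infty(\omega)=\lim_n T_n(\omega)=\lim_n\bar T_n(\omega)=\bar T_\infty(\omega)$ by \eqref{eq:sameyt} (all indices satisfy $n\le\sigma(\omega)=\infty$). The one point to handle with care — the main obstacle — is making the coupling through the Gillespie Algorithm precise: one must be explicit that the same underlying uniform/exponential variables drive both recursions and that the recursion's output at step $n+1$ depends only on $Y_n$ (and the step-$n$ random variables), so that agreement of the rows of $\pi$ and $\bar\pi$ over $\mathcal D$ genuinely forces pathwise agreement up to $\sigma$. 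Once that bookkeeping is set up cleanly (ideally by referring to the explicit statement of the algorithm in App.~\ref{preproofs}), everything else is a routine unwinding of \eqref{eq:cpathdef} and Lemma~\ref{hitdc}.
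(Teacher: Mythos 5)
Your proposal is correct and follows essentially the same route as the paper: couple the two chains by running the Gillespie Algorithm with the same initial draw and the same uniform/exponential variables but with $\bar{Q}$ in place of $Q$, argue by induction (using that $\pi$ and $\bar{\pi}$ agree on rows indexed by $\cal{D}$) that the jump chains and jump times agree up to $\sigma$, and then deduce \eqref{eq:sametime}--\eqref{eq:sameproc} via Lemma~\ref{hitdc} and \eqref{eq:cpathdef}. The only cosmetic difference is that the paper establishes $\sigma=\bar{\sigma}$ from an explicit indicator-function expression before stating \eqref{eq:sameyt}, whereas you obtain \eqref{eq:sameyt} first and read off $\sigma=\bar{\sigma}$ afterwards; the substance is identical.
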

\begin{proof}
See Appendix A in the Supplementary Material.
\end{proof}

\subsection{The time-varying law of the chain and the FSP scheme}\label{fspsec}

The time-varying law of the chain 
\begin{equation}\label{eq:lawdef}p_t(x):=\Pbl{\{X_t=x,t<T_\infty\}}\qquad\forall x\in\s,\end{equation}
satisfies $\mmag{\s}$ linear ordinary differential equations known as \emph{Kolmogorov's forward equations} (or \emph{the chemical master equation} or, simply, \emph{the forward equations}).
\begin{theorem}[Kolmogorov's forward equations, Cor.~2.21 of \cite{Kuntzthe}]\label{CME} Suppose that the diagonal of the rate matrix is $\gamma$-integrable:
\begin{equation}\label{eq:qdiaint}\Ebl{\mmag{q(X_0,X_0)}}=\sum_{x\in\s}\gamma(x)\mmag{q(x,x)}<\infty.\end{equation}
For each $x\in\s$, $t\mapsto p_t(x)$ is a continuously differentiable function on $[0,\infty)$. Furthermore, the time-varying law $p_t:=\{p_t(x)\}_{x\in\s}$ is the minimal non-negative solution of the equations
\begin{equation}\label{eq:laweqs}\dot{p}_t(x)=\sum_{y\in\s}p_t(y)q(y,x),\qquad p_0(x)=\gamma(x),\qquad \forall x\in\s,\quad t\in[0,\infty).\end{equation}
%
%
%
\end{theorem}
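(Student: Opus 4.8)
The plan is to work directly from the Gillespie construction of $X$, decomposing $p_t(x)$ according to the index of the last jump of the chain before time $t$; this delivers simultaneously a renewal integral equation for $p$, the minimality claim, and (after one more regularity argument) the differentiability claim.

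\emph{Step 1 (renewal equation and minimality via Picard iteration).} I would define iterates $p^{[0]}_t(x):=\gamma(x)e^{q(x,x)t}$ and
$$p^{[n+1]}_t(x):=\gamma(x)e^{q(x,x)t}+\int_0^t\Big(\sum_{y\neq x}p^{[n]}_s(y)\,q(y,x)\Big)e^{q(x,x)(t-s)}\,ds,\qquad n\in\n,$$
and prove by induction that $p^{[n]}_t(x)=\sum_{k=0}^{n}\Pbl{\{Y_k=x,\ T_k\le t<T_{k+1}\}}$. The base case is the holding-time law at the initial state, and the induction step rests on the identity
$$\sum_{y\neq x}\Pbl{\{Y_k=y,\ T_k\le s<T_{k+1}\}}\,q(y,x)=\frac{d}{ds}\,\Pbl{\{Y_{k+1}=x,\ T_{k+1}\le s\}},$$
i.e.\ the left-hand side is the density in $s$ with which $X$ enters $x$ at its $(k+1)$-st jump; convolving it with the survival function $e^{q(x,x)(t-s)}$ of the ensuing holding time returns $\Pbl{\{Y_{k+1}=x,\ T_{k+1}\le t<T_{k+2}\}}$. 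This identity uses only that, conditionally on the current state, the holding time is exponential and independent of the past -- the regenerative structure built into the Gillespie algorithm. Since the events $\{Y_k=x,\ T_k\le t<T_{k+1}\}$, $k\in\n$, are disjoint with union $\{X_t=x,\ t<T_\infty\}$ by~\eqref{eq:cpathdef}, monotone convergence yields $p^{[n]}_t(x)\uparrow p_t(x)$, and letting $n\to\infty$ in the recursion gives the renewal equation
$$p_t(x)=\gamma(x)e^{q(x,x)t}+\int_0^t\Big(\sum_{y\neq x}p_s(y)\,q(y,x)\Big)e^{q(x,x)(t-s)}\,ds.\qquad(\star)$$
Because $p_t(x)\le1$ while, by stability, $e^{q(x,x)(t-s)}\ge e^{q(x,x)t}>0$ on $[0,t]$, equation $(\star)$ forces the source term $b_t(x):=\sum_{y\neq x}p_t(y)q(y,x)$ to be locally integrable in $t$. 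Minimality is then immediate: if $(r_t(x))$ is any non-negative solution of~\eqref{eq:laweqs}, then $r\ge p^{[n]}$ gives $\dot r_t(x)=q(x,x)r_t(x)+\sum_{y\neq x}r_t(y)q(y,x)\ge q(x,x)r_t(x)+\sum_{y\neq x}p^{[n]}_t(y)q(y,x)$; multiplying by $e^{-q(x,x)t}$ and integrating from $0$, where $r_0=\gamma$, shows $r\ge p^{[n+1]}$, so by induction $r_t(x)\ge\lim_n p^{[n]}_t(x)=p_t(x)$.

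\emph{Step 2 (regularity).} Rewriting $(\star)$ as $p_t(x)=e^{q(x,x)t}\big(\gamma(x)+\int_0^t b_s(x)e^{-q(x,x)s}\,ds\big)$, local integrability of $s\mapsto b_s(x)$ already makes $t\mapsto p_t(x)$ absolutely continuous with a.e.\ derivative $\sum_{y}p_t(y)q(y,x)$. To upgrade this to the asserted $C^1$ regularity on all of $[0,\infty)$, with~\eqref{eq:laweqs} holding everywhere, it suffices to show $t\mapsto b_t(x)$ is continuous. Writing $b_t(x)=\sum_{k\ge0}f_k(t)$, where $f_k$ is the density in $t$ with which $X$ enters $x$ at its $(k+1)$-st jump, the terms with $k\ge1$ are continuous because $T_k$ has a density, while $f_0(t)=\sum_{y\neq x}\gamma(y)e^{q(y,y)t}q(y,x)$ is continuous precisely because Assumption~\eqref{eq:qdiaint} dominates this series, uniformly in $t$, by $\sum_{y}\gamma(y)|q(y,y)|<\infty$; in particular $b_0(x)<\infty$. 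A dominated/monotone convergence argument, using $\int_0^t f_k(s)\,ds=\Pbl{\{Y_{k+1}=x,\ T_{k+1}\le t\}}\le1$, then transfers continuity to the sum, and differentiating $(\star)$ gives~\eqref{eq:laweqs} at every $t\in[0,\infty)$.

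\emph{Main obstacle.} The heart of the proof is Step 1: making the last-jump decomposition rigorous -- establishing the entrance-density identity, justifying the interchange of the countable sum over jump indices with the Lebesgue integral, and deducing the finiteness and local integrability of $b_t(x)$. The continuity in Step 2 is a more technical but less conceptual point. Once $(\star)$ is available both minimality and $C^1$ regularity are routine; an alternative route to minimality, closer to the theme of the paper, writes $p_t$ as the increasing limit of the laws $p^r_t$ of the finite-state truncations and observes that any non-negative solution of~\eqref{eq:laweqs} restricts to a supersolution of the finite, cooperative linear system solved by $p^r$ and hence dominates it, whence $p_t=\lim_r p^r_t$ is dominated as well.
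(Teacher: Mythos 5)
First, note that the paper does not actually prove Theorem~\ref{CME}: it is quoted as Cor.~2.21 of \cite{Kuntzthe}, so there is no internal proof to compare against, and your proposal has to stand on its own as a Feller-style construction argument. Your Step~1 is essentially sound and is the classical route: the last-jump decomposition, the iterates $p^{[n]}$, the renewal equation $(\star)$, minimality by induction against any non-negative solution, and the resulting absolute continuity of $t\mapsto p_t(x)$ are all correct in substance. One bookkeeping point: Algorithm~\ref{gilalg} assigns fictitious $\operatorname{exp}(1)$ holding times to absorbing states, so for $q(x,x)=0$ one has $\Pbl{\{Y_0=x,T_0\leq t<T_1\}}=\gamma(x)e^{-t}\neq\gamma(x)e^{q(x,x)t}$, and your identity $p^{[n]}_t(x)=\sum_{k\le n}\Pbl{\{Y_k=x,T_k\le t<T_{k+1}\}}$ and the entrance-density identity fail as stated at absorbing $x$ (the right-hand side picks up fictitious self-jumps the left-hand side does not count). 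This is repairable by phrasing the decomposition in terms of proper jumps, and the limit is unaffected, but it should be fixed.

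The genuine gap is Step~2, i.e.\ exactly the part of the statement that the hypothesis \eqref{eq:qdiaint} is there to deliver: continuity of $b_t(x)=\sum_{y\neq x}p_t(y)q(y,x)$, hence $C^1$ regularity and the validity of \eqref{eq:laweqs} at \emph{every} $t$. Two steps of your argument do not go through as written. (i) For $k\geq1$, each $f_k$ is itself an infinite sum over $y$ of continuous functions; ``$T_k$ has a density'' gives nothing, and the natural dominating series is $\sum_y\Pbl{\{Y_k=y\}}\mmag{q(y,y)}=\Ebl{\mmag{q(Y_k,Y_k)}}$, which \eqref{eq:qdiaint} controls only for $k=0$ and which can be infinite for $k\geq1$ (start deterministically at a state whose jump probabilities $2^{-j}$ lead to states with rates $4^{j}$). (ii) Even granting continuity of each $f_k$, the bound $\int_0^t f_k(s)ds\leq 1$ does not ``transfer continuity to the sum'': a pointwise-convergent series of non-negative continuous functions is only lower semicontinuous, and what you need is locally uniform convergence of the tails, which is precisely the hard point. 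Nor can any crude domination rescue this: for an explosive pure-birth chain one has $\sum_y p_t(y)\mmag{q(y,y)}=\infty$ for every $t>0$ even though the theorem's conclusion holds, so a correct proof must avoid bounds of that type. A standard repair is to first prove the forward equation for the transition function $p_t(x,y)$ (deterministic start; the classical Feller--Anderson result), then write $p_t(y)=\sum_x\gamma(x)p_t(x,y)$, use the backward-equation bound $\mmag{\dot{p}_t(x,y)}\leq 2\mmag{q(x,x)}$ together with \eqref{eq:qdiaint} and the Weierstrass M-test to differentiate term by term and obtain continuity of the derivative, and finish with Tonelli (the off-diagonal terms being non-negative) to identify the derivative with $\sum_z p_t(z)q(z,y)$; your minimality argument can then be kept unchanged.
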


In the above, by ``minimal non-negative solution''  we mean that if $k_t$ is any other non-negative ($k_t(x)\geq 0$ for each $x\in\s$ and $t\in[0,\infty)$) differentiable function satisfying \eqref{eq:laweqs}, then $k_t(x)\geq p_t(x)$ for each $x\in\s$ and $t\geq0$ (if the chain is explosive, then the equations can have multiple solutions, see \cite{Chung1967,Freedman1983,Rogers2000a}). Except for a few special cases, no analytical expressions for this minimal solution are known. If $\s$ is infinite, or finite but large, direct numerical computation of this solution is not possible either. Instead, we can use the popular finite state projection (FSP) algorithm \cite{Munsky2006}: a numerical scheme that yields a set of lower bounds $p_t^r:=\{p_t^r(x)\}_{x\in\s}$ on the chain's time-varying law $p_t:=\{p_t(x)\}_{x\in\cal{S}}$. We identify these bounds with the measure on $(\s,\tws)$ defined by $p^r_t(A)=\sum_{x\in A}p^r_t(x)$ for all $A\subseteq \s$, where $\tws$ denotes the power set of $\s$. The FSP scheme consists of: choosing a (finite) truncation $\s_r$ of the state space $\s$; solving numerically the set of $\mmag{\s_r}$ linear ODEs
\begin{equation}\label{eq:fspode}
\dot{p}^r_t(x)=\sum_{y\in\s_r}p^r_t(y)q(y,x),\qquad p^r_0(x)=\gamma(x),\qquad \forall x\in\s_r,  
\end{equation}
over the time interval $[0,t]$; and padding $p_t^r$ with zeros: $p_t^r(x):=0$ for all $x\not\in\s_r$. 

We collect various useful properties of the FSP scheme in Theorem~\ref{fspthrm} below. Most of these properties can be found elsewhere: $\ref{th:fsp_i}$ and $\ref{th:fsp_v}$ are shown in Prop.~2.14 of~\cite{Anderson1991} (however, there is a small mistake therein, see~\cite{Chen1996}); $\ref{th:fsp_iv}$ and the bound in $\ref{th:fsp_iii}$ are proven in \cite{Munsky2006}. Although $\ref{th:fsp_ii}$ is mentioned in \cite{Dinh2016,Munsky2007}, we have not encountered a proof elsewhere. 
Similarly, the explicit expression of the error (i.e., the total variation distance between $p_t$ and its approximation) in $\ref{th:fsp_iii}$ 
and the necessary and sufficient condition for the bound to be sharp appear to be new. 
\begin{theorem}[The finite state projection scheme]\label{fspthrm} 
Let  $\{\s_r\}_{r\in\n}$ be an increasing sequence of finite sets contained in $\s$, $\tau_r$ the exit time from the truncation $\s_r$, 
and $\{p^r_t\}_{r\in\n}$ the sequence of FSP approximations defined by~\eqref{eq:fspode}. 
Then the following properties hold:
\begin{enumerate}[label=(\roman*)]
\item(Increasing sequence of lower bounds) \label{th:fsp_i}
%
%
$$p_t^0(x)\leq p_t^1(x)\leq \dots\leq p_t(x),\qquad\forall x\in\s\quad  t\geq0.$$
\item(Mass of the approximation)  \label{th:fsp_ii}
The mass of the approximation is the probability that the chain has not yet exited the truncation:
$$p_t^r(\s)=p_t^r(\s_r)=\Pbl{\{t<\tau_r\}},\qquad\forall t\geq0.$$
\item(Computable error bound)  \label{th:fsp_iii}
For any $r\in\n$,
$$\norm{p_t-p_t^r}=\Pbl{\{t<T_\infty\}}-\Pbl{\{t<\tau_{r}\}}\leq 1-p_t^r(\s_r),\qquad\forall t\geq0,$$
and equality holds if and only if $\Pbl{\{T_\infty=\infty\}}=1$,
i.e., when the chain is non-explosive. 
\item(Monotonicity of the error and of the error bound)  \label{th:fsp_iv}
The approximation error $\norm{p_t-p_t^r}$ and its upper bound $1-p^r_t(\s_r)$ are decreasing in $r$:
\begin{equation}\label{eq:fspermon1}\norm{p_t-p^r_t}\leq \norm{p_t-p^s_t},\quad 1-p^r_t(\s_r)\leq 1-p^s_t(\s_s), \quad \forall s\leq r, \, \forall t\geq0 
\end{equation}
and increasing in $t$:
\begin{equation}\label{eq:fspermon2}\norm{p_t-p^r_t}\geq \norm{p_u-p^r_u} ,\quad  1-p^r_t(\s_r)\geq 1-p^r_u(\s_r), \quad \forall u\leq t, \, \forall r\in\n.
\end{equation}
%
Consequently, the FSP scheme returns not only the approximation $p^r_t$ of $p_t$, but also an approximation $p^r_s$ of $p_s$ for each $s\leq t$ with an error that is bounded uniformly in $s$:
$$\sup_{s\in[0,t]}\norm{p_s-p^r_s}= \norm{p_t-p^r_t}\leq 1-p_t^r(\s_r),\qquad\forall t\geq0.$$
\item(Convergence of bounds).  \label{th:fsp_v}
If $\cup_{r\in\n}\s_r=\s$, then the scheme converges:
$$\lim_{r\to\infty}\norm{p_t-p_t^r}=0,\qquad \forall t\geq0.$$
\end{enumerate}
\end{theorem}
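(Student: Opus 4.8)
The plan is to realise each FSP approximation $p^r_t$ as the sub-stochastic time-varying law of an \emph{absorbed} copy of $X$ and to read off all five properties from one probabilistic representation. For each $r$, let $Q^r$ be the rate matrix that agrees with $Q$ on $\s_r$ and vanishes off $\s_r$, so every state of $\s\setminus\s_r$ becomes absorbing; $Q^r$ is stable and conservative, and because $\s_r$ is finite its diagonal is automatically $\gamma$-integrable, $\sum_x\gamma(x)\mmag{q^r(x,x)}=\sum_{x\in\s_r}\gamma(x)\mmag{q(x,x)}<\infty$, so Theorem~\ref{CME} applies to $Q^r$ even if it does not apply to $Q$. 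Applying Lemma~\ref{samechainsh} with $\s_r$ in place of the domain $\cal{D}$ produces a chain $X^r$ on $(\Omega,\cal{F},\Pb)$ with rate matrix $Q^r$ that coincides with $X$ up to and including the exit time $\tau_r$ and thereafter sits forever at the absorbing state $X_{\tau_r}\in\s\setminus\s_r$; in particular $X^r$ is non-explosive. By Theorem~\ref{CME} its law $t\mapsto(\Pbl{\{X^r_t=x\}})_{x\in\s}$ solves the forward equations for $Q^r$, and restricting these to $x\in\s_r$ yields exactly the closed finite linear system~\eqref{eq:fspode} with initial data $\gamma$; by uniqueness of solutions of linear ODEs, $p^r_t(x)=\Pbl{\{X^r_t=x\}}$ for $x\in\s_r$. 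Since $X^r$ follows $X$ until $\tau_r$ and then leaves $\s_r$ for good, this gives the representation
\begin{equation}\label{plan:rep}
p^r_t(x)=\Pbl{\{X_t=x,\,t<\tau_r\}}\text{ for }x\in\s_r,\qquad p^r_t(x)=0\text{ for }x\notin\s_r,
\end{equation}
on which everything else rests (note that $t<\tau_r$ forces $t<T_\infty$, since $\tau_r\le T_\infty$).

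The remaining deductions are bookkeeping on~\eqref{plan:rep}. For~\ref{th:fsp_i}, the truncations increase, so $\tau_r\le\tau_{r+1}\le T_\infty$ and $\{X_t=x,\,t<\tau_r\}\subseteq\{X_t=x,\,t<\tau_{r+1}\}\subseteq\{X_t=x,\,t<T_\infty\}$; together with the zero padding off $\s_r$ this gives the nested lower bounds $p^r_t\le p^{r+1}_t\le p_t$. For~\ref{th:fsp_ii}, summing~\eqref{plan:rep} over $x\in\s_r$ and using $\{t<\tau_r\}\subseteq\{X_t\in\s_r\}$ yields $p^r_t(\s)=p^r_t(\s_r)=\Pbl{\{t<\tau_r\}}$. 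For~\ref{th:fsp_iii}, by~\ref{th:fsp_i} the measure $p_t-p^r_t$ puts non-negative mass on every state, so by~\eqref{eq:norm_def} its total-variation norm equals its total mass $p_t(\s)-p^r_t(\s)$, which by~\eqref{eq:lawdef} and~\ref{th:fsp_ii} equals $\Pbl{\{t<T_\infty\}}-\Pbl{\{t<\tau_r\}}\le 1-p^r_t(\s_r)$, with equality for a fixed $t$ iff $\Pbl{\{t<T_\infty\}}=1$, hence for all $t\ge0$ iff $\Pbl{\{T_\infty=\infty\}}=1$ since $\{T_\infty=\infty\}=\bigcap_{t\ge0}\{t<T_\infty\}$. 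The $r$-monotonicity in~\ref{th:fsp_iv} is immediate ($0\le p_t-p^r_t\le p_t-p^s_t$ pointwise for $s\le r$, and $1-p^r_t(\s_r)=\Pbl{\{\tau_r\le t\}}$ decreases in $r$ because $\tau_s\le\tau_r$); the error bound $1-p^r_t(\s_r)=\Pbl{\{\tau_r\le t\}}$ is a distribution function, hence non-decreasing in $t$, and since $\norm{p_s-p^r_s}\le1-p^r_s(\s_r)\le1-p^r_t(\s_r)$ for $s\le t$ this already gives the uniform bound stated after~\ref{th:fsp_iv}. Finally, for~\ref{th:fsp_v}, Lemma~\ref{tautin} (using $\cup_r\s_r=\s$) gives $\tau_r\uparrow T_\infty$ almost surely, so $1_{\{t<\tau_r\}}\to1_{\{t<T_\infty\}}$ a.s., and bounded convergence yields $\Pbl{\{t<\tau_r\}}\to\Pbl{\{t<T_\infty\}}$, whence $\norm{p_t-p^r_t}=\Pbl{\{t<T_\infty\}}-\Pbl{\{t<\tau_r\}}\to0$.

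The one point I expect to need real care is the assertion in~\ref{th:fsp_iv} that the \emph{exact} error $\norm{p_t-p^r_t}=\Pbl{\{\tau_r\le t<T_\infty\}}$ is non-decreasing in $t$. Unlike the error bound, this is a difference of two distribution functions and is not obviously monotone: once $X$ has left $\s_r$ it may still explode, so mass already contributing to the error can later disappear into the explosion. When the chain is non-explosive the quantity collapses to the distribution function $\Pbl{\{\tau_r\le t\}}$ and monotonicity is trivial, so I would either invoke non-explosivity for this sub-claim or simply note that the always-monotone bound $1-p^r_t(\s_r)$ is all that the ``$\sup_{s\le t}$'' consequence actually requires. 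A minor additional check is that Lemma~\ref{samechainsh}, although phrased for a domain $\cal{D}$, applies verbatim with $\s_r$ in that role, its construction using nothing about $\cal{D}$ beyond $\cal{D}\subseteq\s$.
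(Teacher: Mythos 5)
You take essentially the paper's route: realise $p^r_t$ as the sub-stochastic law of the absorbed chain $X^r$ built from the rate matrix \eqref{eq:qr} via Lemma~\ref{samechainsh} with $\s_r$ in the role of $\cal{D}$, obtain $p^r_t(x)=\Pbl{\{X_t=x,\,t<\tau_r\}}$ from Theorem~\ref{CME} (noting that $\gamma$-integrability of the diagonal of $Q^r$ is automatic and that $X^r$ is non-explosive because its rates are bounded --- the paper spells this out with a law-of-large-numbers argument, \eqref{eq:qrnonexp}), and then read (i)--(iii) off event inclusions and the mass/total-variation identity, and (v) off Lemma~\ref{tautin} plus monotone convergence; all of this is sound. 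Two remarks. For the sharpness claim in (iii) you prove only the ``equality for every $t$'' version, via $\{T_\infty=\infty\}=\bigcap_{t\geq0}\{t<T_\infty\}$; the paper proves the stronger pointwise statement (equality at any single $t>0$ iff the chain is non-explosive) by citing Chung's result that $p_t(\s)<1$ for one $t>0$ iff for all $t>0$ --- your version suffices for the statement as displayed, but the pointwise fact is what the later discussion of non-termination of the FSP algorithm relies on. More importantly, your hesitation over the $t$-monotonicity of the exact error in (iv) is justified and in fact exposes a slip in the paper: for an explosive chain $\norm{p_t-p^r_t}=\Pbl{\{\tau_r\leq t<T_\infty\}}$ need not be non-decreasing in $t$ (for an almost surely explosive pure-birth chain it is zero at $t=0$, positive at intermediate times, and tends to $0$ as $t\to\infty$), so the first inequality of \eqref{eq:fspermon2} and the equality $\sup_{s\in[0,t]}\norm{p_s-p^r_s}=\norm{p_t-p^r_t}$ hold in general only under non-explosivity, while the paper's one-line proof of (iv) really only covers the $r$-monotonicity and the monotonicity of the bound; your proposed repair --- assume non-explosivity for that sub-claim, or retain only the always-valid monotone bound $1-p^r_t(\s_r)=\Pbl{\{\tau_r\leq t\}}$ together with $\sup_{s\in[0,t]}\norm{p_s-p^r_s}\leq1-p^r_t(\s_r)$ --- is exactly the right one.
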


\begin{proof}See Appendix B in the Supplementary Material.\end{proof}

The ideas behind Theorem~\ref{fspthrm} emerge from the following construction. Consider a second chain $X^r$ which is identical to $X$ except that every state outside of the truncation $\s_r$ is turned into an absorbing state. In particular, let $X^r$ be the chain of Lemma~\ref{samechainsh} with $\s_r$ replacing $\cal{D}$, and $Q^r:=(q^r(x,y))_{x,y\in\s}$ replacing $\bar{Q}$, where
\begin{equation}\label{eq:qr}
q^r(x,y):=\left\{\begin{array}{ll}q(x,y)&\text{if }x\in\s_r\\ 0&\text{if }x\not\in\s_r\end{array}\right.
\end{equation}
Lemma \ref{samechainsh} states that the chains $X$ and $X^r$ coincide until (and including) the time $\tau_r$ at which they simultaneously leave the truncation $\s_r$ for the first time, at which point $X^r$ becomes trapped in a state outside of the truncation and never returns to $\s_r$. In contrast, $X$ may return to the truncation, hence the probability $p_t(x)$ that $X$ is at any given state $x$ inside the truncation at time $t$ is greater or equal than the probability that $X^r$ is in the same state at the same time. Since the law of $X^r_t$ (restricted to $\s_r$) is the solution of \eqref{eq:fspode} (Theorem~\ref{CME}), we arrive at Theorem~$\ref{fspthrm} \ref{th:fsp_i}$.

The probability $p^r_t(\s_r)$ that $X^r$ is inside the truncation at time $t$ is the same as the probability $\Pbl{\{t<\tau_r\}}$ that it has not yet left. Theorem~$\ref{fspthrm} \ref{th:fsp_ii}$--$\ref{th:fsp_iii}$ follows from this fact. If $X^r$ has not left the truncation by time $t$, then it has not left the \emph{larger} truncation $\s_{r+1}$ by $t$. Similarly, if the chain has not left $\s_r$ by time $t$, it has not left by any earlier time $s\leq t$. For these reasons, Theorem $\ref{fspthrm} \ref{th:fsp_iv}$ holds.

Due to $\ref{th:fsp_iii}$, proving the convergence of the scheme consists of showing that $\Pbl{\{t<\tau_r\}}$ converges to $\Pbl{\{t<T_\infty\}}$ as $r$ tends to infinity. Recall that $\Pbl{\{t<T_\infty\}}$ is the probability that the chain has not left the state space by time $t$ while $\Pbl{\{t<\tau_r\}}$ is the probability that the chain has not left the truncation $\s_r$ by time $t$. Because the truncations $\s_r$ approach the complete state space as $r$ tends to infinity, it must be the case that $\Pbl{\{t<\tau_r\}}$ approaches $\Pbl{\{t<T_\infty\}}$ or, equivalently, that the scheme converges as stated in Theorem $\ref{fspthrm} \ref{th:fsp_v}$.

The FSP \emph{algorithm} as proposed in \cite{Munsky2006} consists of repeatedly computing $p^r_t$ while increasing the size of the truncation until the error bound $1-p^r_t(\s_r)$ is smaller than some prescribed tolerance. As noted in \cite{MacNamara2007}, the algorithm may not terminate, even if the truncations tend to the state space as $r$ tends to infinity. 
Theorem $\ref{fspthrm} \ref{th:fsp_v}$
clarifies this issue. Although the \emph{scheme} converges (i.e., $p^r_t$ tends to $p_t$ in total variation as $r$ tends to infinity or, equivalently, $p^r_t(\s_r)$ tends to $\Pbl{\{t<T_\infty\}}$), this does not imply that the error bound $1-p_t^r(\s_r)$ converges to zero. This is only the case if the chain is non-explosive (i.e., $\Pbl{\{T_\infty<\infty\}}=0$). Otherwise, $\Pbl{\{t<T_\infty\}}>0$ for all $t>0$ (see the proof of Theorem $\ref{fspthrm} \ref{th:fsp_iii}$) and the algorithm will not terminate if the tolerance is set to be smaller than $1-\Pbl{\{t<T_\infty\}}$. In practice, non-explosivity can be established using a Foster-Lyapunov criterion \cite{Chen1991,Meyn1993b}.

We close this section by pointing out that the FSP scheme can also be used to compute converging approximations of the occupation measure associated with a deterministic time $t$, which tells us how long the chain has spent in state $x$ by time $t$ (see~\cite[Cor.~3.2]{Kuntzthe} for details).

\subsection{The exit time and its associated exit distribution and occupation measure}\label{edom} 
Let $\mu$, $\nu$, $\mu^r$, and $\nu^r$ be defined as in \eqref{eq:edisdef}--\eqref{eq:eoccdef} and \eqref{eq:md2}--\eqref{eq:md22}. Our convention of $\inf\emptyset=\infty$ and the exit time's definition in \eqref{eq:hitthec} 
implies that it is finite if and only if it is strictly less than the explosion time: $\tau(\omega)<\infty \iff \tau(\omega)<T_\infty(\omega)$ for any $\omega\in\Omega$. 
Therefore, $X_\tau$ is defined on $\{\tau<\infty\}$ and $\mu$ is well-defined. Technically, $\mu$, $\nu$, $\mu^r$, and $\nu^r$ are unsigned measures on $([0,\infty)\times\s,\cal{X})$ where $\cal{X}$ is the product sigma algebra of $\tws$ and the Borel sigma $\cal{B}([0,\infty))$ on $[0,\infty)$. When using \eqref{eq:edisdef}--\eqref{eq:eoccdef} and \eqref{eq:md2}--\eqref{eq:md22} to define these four measures, we exploit the fact that $\{[a,b)\times\{x\}:0\leq a<b<\infty,x\in\s\}$ is a $\pi$-system that generates $\cal{X}$. 

From the definition~\eqref{eq:edisdef} of the exit distribution $\mu$, it follows that its mass is the probability that the chain eventually leaves the domain:
\begin{equation}\label{eq:mumass}\mu([0,\infty),\s)
=\Pbl{\{\tau<\infty,X_\tau\in\s\}}=\Pbl{\{\tau<\infty\}},\end{equation}
Similarly, it follows from \eqref{eq:eoccdef} that the mass of the occupation measure is
\begin{equation}\label{eq:numass}\nu([0,\infty),\s)=\Ebl{\int_0^{\tau\wedge T_\infty} \left(\sum_{x\in\s}1_{x}(X_t)\right) dt}=\Ebl{\tau\wedge T_\infty}.\end{equation}
If the chain is non-explosive (i.e., $\Pbl{\{T_\infty=\infty\}}=1$), the mass is the mean exit time. For explosive chains (i.e., $\Pbl{\{T_\infty=\infty\}}<1$), the same holds as long as the chain cannot explode without first exiting the domain (i.e., $\Pbl{\{\tau\leq  T_\infty\}}=1$). 

In~\eqref{eq:edisdef}--\eqref{eq:eoccdef}, we defined the exit distribution $\mu$ and occupation measure $\nu$ probabilistically in terms of the chain $X$. These measures are characterised analytically in terms of the solutions of the ODEs \eqref{eq:jointchar2} in the following theorem.
\begin{theorem}[Analytical characterisation of $\mu$ and $\nu$]\label{charactt} Suppose that \eqref{eq:qhatint} holds. The exit distribution $\mu$ and occupation measure $\nu$ decompose as in \eqref{eq:md1}--\eqref{eq:md11} and their densities $\nu(t,x)$ and $\mu(t,x)$ are non-negative and continuous functions on $[0,\infty)$, for each $x\in\s$. Moreover, 
\begin{equation}\label{eq:jointchar1}\mu(t,x)=1_{\cal{D}^c}(x)\dot{\hat{p}}_t(x),\qquad \nu(t,x)=1_{\cal{D}}(x)\hat{p}_t(x),\qquad\forall x\in\s,\quad  t\in[0,\infty),
\end{equation}
where $\hat{p}_t$ is the minimal non-negative solution (as in Theorem~\ref{CME}) of
\begin{equation}\label{eq:jointchar2}\dot{\hat{p}}_t(x)=\sum_{y\in\cal{D}}\hat{p}_t(y)q(y,x),\qquad \hat{p}_0(x)=\gamma(x),\qquad\forall x\in\s,\quad  t\in[0,\infty).\end{equation}
\end{theorem}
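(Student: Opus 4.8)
The plan is to build an auxiliary chain $\hat X$ via Lemma~\ref{samechainsh} that agrees with $X$ on the domain $\cal D$ but has every state in $\cal D^c$ turned into an absorbing state, so its rate matrix $\hat Q$ is given by $\hat q(x,y) = q(x,y)$ for $x \in \cal D$ and $\hat q(x,x') = 0$ for $x \notin \cal D$. By Lemma~\ref{samechainsh}, $\hat X$ and $X$ share the same jump chain and jump times up to the exit index $\sigma$, exit the domain at the same instant ($\hat\tau = \tau$), and are identical up to and including that instant. The key observation is that once $\hat X$ exits $\cal D$ it stays forever at $\hat X_\tau = X_\tau$, so its time-varying law $\hat p_t(x) := \Pbl{\{\hat X_t = x,\, t<\hat T_\infty\}}$ accumulates, on each absorbing state $x \in \cal D^c$, exactly the probability that $X$ has already exited via $x$ by time $t$; and on each state $x \in \cal D$ it equals the probability that $X$ is at $x$ at time $t$ without having yet left $\cal D$. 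First I would verify that condition~\eqref{eq:qhatint} is precisely the hypothesis~\eqref{eq:qdiaint} of Theorem~\ref{CME} applied to $\hat Q$ (since $\hat q(x,x)=0$ off $\cal D$, the sum $\sum_x \gamma(x)|\hat q(x,x)| = \sum_{x\in\cal D}\gamma(x)|q(x,x)|$ is finite), so Theorem~\ref{CME} applies to $\hat X$: each $t \mapsto \hat p_t(x)$ is $C^1$ on $[0,\infty)$ and $\hat p_t$ is the minimal non-negative solution of~\eqref{eq:jointchar2} (the equations~\eqref{eq:laweqs} for $\hat Q$ collapse to a sum over $y \in \cal D$ only, since $\hat q(y,x)=0$ for $y \notin \cal D$).

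Next I would establish the two probabilistic identities. For $x \in \cal D$: using Lemma~\ref{hitdc} and the path construction~\eqref{eq:cpathdef}, $\{\hat X_t = x,\, t < \hat T_\infty\} = \{X_t = x,\, t < \tau\wedge T_\infty\}$ up to null sets, because $\hat X$ agrees with $X$ before exit and is trapped in $\cal D^c$ after, so $\hat p_t(x) = \Pbl{\{X_t = x,\, t<\tau\wedge T_\infty\}}$. Differentiating the occupation-measure definition~\eqref{eq:eoccdef}: by Tonelli, $\nu([a,b),x) = \int_a^b \Pbl{\{X_t = x,\, a\wedge\tau\wedge T_\infty \le t < b\wedge\tau\wedge T_\infty\}}\,dt$, and for $x \in \cal D$ (so $x = X_t$ forces $t < \tau$) the integrand is exactly $\hat p_t(x)$, which is continuous; hence $\nu(dt,x)$ has continuous density $\nu(t,x) = 1_{\cal D}(x)\hat p_t(x)$, giving~\eqref{eq:md11} and the second half of~\eqref{eq:jointchar1}. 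For $x \in \cal D^c$: once $\hat X$ reaches an absorbing state $x$ it never leaves, so $\{\hat X_t = x,\, t<\hat T_\infty\} = \{\hat\tau \le t,\, \hat X_{\hat\tau} = x\} = \{\tau \le t,\, X_\tau = x\}$ on $\{\tau<\infty\}$ (using $\hat\tau = \tau$, $\hat X_{\hat\tau} = X_\tau$, and that $\hat X$ cannot explode after absorption so $t < \hat T_\infty$ is automatic there). Thus $\hat p_t(x) = \Pbl{\{\tau \le t,\, X_\tau = x\}} = \mu([0,t],x)$ for $x \in \cal D^c$. Splitting off the $t=0$ atom — which by~\eqref{eq:edisdef} is $1_{\cal D^c}(x)\gamma(x)$ — and differentiating the continuously differentiable function $t \mapsto \hat p_t(x)$ yields the decomposition~\eqref{eq:md1} with density $\mu(t,x) = 1_{\cal D^c}(x)\dot{\hat p}_t(x)$, which is continuous and non-negative (non-negativity because $t\mapsto \hat p_t(x)$ is non-decreasing on an absorbing state, being a cumulative probability). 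That gives the first half of~\eqref{eq:jointchar1}.

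The main obstacle I anticipate is the rigorous handling of the explosive case and the measure-theoretic bookkeeping at the boundary: one must check that $\hat X$ really cannot explode \emph{after} it has been absorbed (immediate, since a trapped state emits no further jumps, so all jump times past $\bar T_\sigma$ are infinite and $\hat T_\infty = \infty$ on $\{\sigma<\infty\}$), and one must be careful that the event identities for $x\in\cal D^c$ hold exactly and not merely up to the explosion set — here Lemma~\ref{hitdc}'s dichotomy ($\tau = T_\sigma$ if $\sigma<\infty$, else $\tau=\infty$) is what makes $\{\tau\le t,X_\tau=x\}\subseteq\{\sigma<\infty\}$ and hence keeps us away from $T_\infty$. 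A secondary technical point is justifying the interchange of differentiation and the time-integral/expectation to pass between the measures and their densities; this is clean because Theorem~\ref{CME} (applied to $\hat Q$) already delivers that each $\hat p_t(x)$ is continuously differentiable, so the densities in~\eqref{eq:jointchar1} are continuous by construction and the fundamental theorem of calculus applies directly. Finally, I would remark that minimality of $\hat p_t$ as a solution of~\eqref{eq:jointchar2} is inherited verbatim from Theorem~\ref{CME} for the chain $\hat X$, completing the characterisation.
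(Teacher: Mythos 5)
Your proposal is correct and takes essentially the same route as the paper: the same absorbed auxiliary chain $\hat{X}$ built from Lemma~\ref{samechainsh}, Theorem~\ref{CME} applied to $\hat{Q}$ (with \eqref{eq:qhatint} playing the role of \eqref{eq:qdiaint}), the absorption/"stuck after exit" identity, and the no-explosion-after-absorption fact, which the paper packages as Lemma~\ref{exbst} and which you invoke directly. One small nuance: in the paper's Gillespie construction an absorbing state still has finite exponential self-jump times (it is not that "all jump times past $\bar T_\sigma$ are infinite"), so $\hat{T}_\infty=\infty$ on $\{\sigma<\infty\}$ holds only $\Pb$-almost surely, via $\sum_k \xi_k=\infty$, exactly as in the paper's proof — this does not affect your argument.
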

\begin{proof}
See Appendix C in the Supplementary Material.
\end{proof}

The ideas behind the above theorem are similar to those behind Theorem \ref{fspthrm}. In particular, we consider a second chain $\hat{X}$ identical to $X$ except that every state outside of the domain $\cal{D}$ is turned into an absorbing state. That is, let $\hat{X}$ be the chain of Lemma \ref{samechainsh} after replacing $\bar{Q}$ with $\hat{Q}:=(\hat{q}(x,y))_{x,y\in\s}$, where
\begin{equation}\label{eq:qhat}\hat{q}(x,y):=\left\{\begin{array}{ll} q(x,y)&\text{if }x\in\cal{D} \\ 0&\text{if }x\not\in\cal{D} \end{array}\right..\end{equation}
%

The chains $X$ and $\hat{X}$ are identical up until (and including) the time at which they both simultaneously exit the domain via the same state. Therefore the probability $\mu([0,t),x)$ that $X$ has exited the domain by time $t$ via state $x\in\cal{D}^c$ is also the probability that $\hat{X}$ exited via $x$ by time $t$. Because $\hat{X}$ is trapped in the first state it enters once leaving the truncation, it follows that $\mu([0,t),x)$ is the probability that $\hat{X}$ is in state $x$ by time $t$. The characterisation of the exit distribution then follows from Theorem~\ref{CME}. The characterisation of the occupation measure follows similarly. 
The key observation is that once $\hat{X}$ leaves the domain it cannot return, hence the amount of time that $\hat{X}$ spends in a state $x \in \mathcal{D}$ until the moment it exits the domain is the \emph{total} time it will spend in that state.

\subsection*{The marginals} In applications, we are often interested in the distribution of the exit time itself, that is, the \emph{time marginal} of the exit distribution 
\begin{equation}\label{eq:mutc}\mu_T(B):=\mu(B,\s)=\Pbl{\{\tau\in B,X_\tau\in\s\}}=\Pbl{\{\tau\in B\}},\quad \forall B\in\cal{B}([0,\infty)).\end{equation}
Technically, the above is the distribution of $\tau$ restricted to $[0,\infty)$. However, we recover the complete distribution from $\mu_T$ as $\Pbl{\{\tau=\infty\}}=1-\Pbl{\{\tau<\infty\}}=1-\mu_T([0,\infty))$.
Equations \eqref{eq:jointchar1}--\eqref{eq:jointchar2} imply that $\mu(t,x)$ is non-negative and so combining \eqref{eq:md1} and Tonelli's theorem shows that the time-marginal $\mu_T(dt)$ of the exit distribution also has a density $\mu_T(t)$ with respect to the Lebesgue measure and that this density is given by $\mu_T(t)=\sum_{x\in\s}\mu(t,x)$:
\begin{equation}\label{eq:mutden}\mu_T(B)=\gamma(\cal{D}^c)\delta_0(B)+\int_{B}\mu_T(t)dt\qquad\forall B\in\cal{B}([0,\infty)),\quad x\in\s,\end{equation}
where $\delta_0$ denotes the Dirac measure at zero ($\delta_0(B)=1$ if $0\in B$ and $0$ otherwise).

In other cases, we are interested in where on the boundary the exit occurs or where in the domain the chain spends time up until exiting. The \emph{space marginals} of the exit distribution and the occupation measure provide this information:
\begin{align}
\label{eq:musc}\mu_S(x)&:=\mu([0,\infty),x)=\Pbl{\{X_\tau=x,\tau<\infty\}}, &\forall x\in\s,\\
\label{eq:nusc}\nu_S(x)&:=\nu([0,\infty),x)=\Ebl{\int_0^{\tau\wedge T_\infty} 1_{x}(X_t)dt}, &\forall x\in\s.
\end{align}
Clearly, one can obtain explicit expressions for $\mu_T, \mu_S$, and $\nu_S$ in terms of $\hat{p}$. 

\section{Theoretical characterisation of the ETFSP scheme: Proof of Theorem \ref{etfspthrm} and bounding the marginal distributions}\label{etfsp}
We now prove Theorem~\ref{etfspthrm}, which delineates the theoretical properties of the ETFSP scheme. Before delving into the proof, we discuss briefly some the intuitive ideas underlying the proof for the exit distribution (the occupation measure is analogous). 

Consider the auxiliary chain $X^r$ introduced above,
which is identical to the original chain except that each state outside of the truncation $\s_r$ is turned into an absorbing state. Once $X^r$ exits the truncation, it becomes trapped in whichever state it just entered. For this reason, if $X^r$ has not exited the domain by the time it exits $\s_r$, then it will never exit. Theorem \ref{charactt} tells us that  $\mu^r(dt,x)$ is the exit distribution $\rho^r(dt,x)$ of $X^r$ restricted to $[0,t^r_f]\times\s_r$. Thus, Theorem~\ref{etfspthrm}$\ref{th:etfsp_ii}$ follows from the fact that $X$ and $X^r$ are identical up until, and including, the moment that they simultaneously exit the truncation (Lemma \ref{samechainsh}). In contrast with $X^r$, the original chain $X$ may still exit the domain after it leaves the truncation because it does not necessarily get trapped in an absorbing state. During a small interval of time $[t,t+h]$, the probability of exiting the domain $\cal{D}$ via state $x$ is $\mu(t,x)h$ for $X$ and $\rho^r(t,x)h$ for $X^r$. Given that, for any interval size $h$, this probability cannot be greater for $X^r$ than for $X$, the lower bound property in Theorem~\ref{etfspthrm}$\ref{th:etfsp_i}$ follows from the continuity of $\mu(\cdot,x)$ and $\rho^r(\cdot,x)$ (Theorem \ref{charactt}). The remainder of the theorem then follows $\ref{th:etfsp_i}$--$\ref{th:etfsp_ii}$ and the fact that $\tau_r$ is an increasing sequence with limit $T_\infty$ (Lemma \ref{tautin}).

\begin{proof}[Proof of Theorem \ref{etfspthrm}] 
Let $Y^r:=\{Y^r_n\}_{n\in\n}$,  $\{T^r_n\}_{n\in\n}$, and $T^r_\infty$ be the jump chain, jump times, and explosion time of $X^r$.

$\ref{th:etfsp_i}$ Theorem \ref{charactt} tells us that $\nu(t,x)=1_\cal{D}(x)\hat{p}_t(x)$, where $\hat{p}_t$ is the minimal non-negative solution of \eqref{eq:jointchar2}. Theorem \ref{CME} tells us that $\hat{p}_t$ is the time-varying law of the auxiliary chain $\hat{X}$ with rate matrix $\hat{Q}$ defined in \eqref{eq:qhat}. Applying the FSP scheme to $\hat{X}$ instead of $X$ entails solving
\begin{equation}\label{eq:fsphat}\dot{k}^r_t(x)=\sum_{y\in\s_r}k^r_t(y)\hat{q}(y,x)=\sum_{y\in\cal{D}_r}k^r_t(y)\hat{q}(y,x),\qquad k_0^r(x)=\gamma(x),\qquad\forall x\in\s_r\end{equation}
and setting $k^r_t(x)=0$ for all $x\not\in\s_r$. Comparing \eqref{eq:nrdef} and \eqref{eq:fsphat} we can see that 
\begin{equation}\label{eq:knu}\nu^r(t,x)=k^r_t(x),\qquad \forall x\in\cal{D}_r,\quad t\leq t^r_f.\end{equation}
Given \eqref{eq:mrdef2} and the fact that the final times $t^r_f$ are increasing, the second set of inequalities then follows directly from \eqref{eq:jointchar1} and Theorem \ref{fspthrm}$\ref{th:fsp_i}$. Similarly, \eqref{eq:knu}, \eqref{eq:mrdef}, and the finiteness of $\cal{D}_r$ imply that 
$$\mu^{r+1}(t,x)-\mu^r(t,x)=\sum_{z\in\cal{D}_r}(k^{r+1}_t(z)-k^r_t(z))q(z,x)\geq0\qquad\forall x\in\s_r\cap\cal{D}^c,$$
where the inequality follows from Theorem~\ref{fspthrm}$\ref{th:fsp_i}$ and the fact that $x\neq z$ in the above sum so that $q(z,x)\geq 0$. Replacing $\mu^{r+1}$ by $\mu$ and $k^{r+1}$ by $\hat{p}$ in the above argument and applying \eqref{eq:mrdef2} and \eqref{eq:jointchar1} gives us the other set of inequalities.

$\ref{th:etfsp_ii}$ Aside from having to use the fact the explosion time of the chain $X^r$ is a.s. infinite (see \eqref{eq:qrnonexp} in the Supplementary Material), the proof of the expression for the mass of $\nu^r$ is analogous to that for $\mu^r$ and so we skip.  Applying Theorem \ref{charactt} to $X^r$ instead of $X$ shows that $\mu^r$, restricted to $[0,t^r_f]\times\s_r$, coincides with the corresponding restriction of the density of the exit distribution associated with the first time that $X^r$ exits the domain:
$$\tau^r_{\cal{D}}:=\inf\{0\leq t < T^r_\infty:X^r_t\not\in\cal{D}\}.$$
Thus, \eqref{eq:mrdef2} and the definition of the exit distribution \eqref{eq:edisdef} imply that the mass of $\mu^r$ is the probability that $X^r$ exits the domain no later than the final time $t^r_f$ and via a state inside the truncation:
$$\mu^r([0,\infty),\s)=\mu^r([0,t^r_f),\s)=\Pb_\lambda(\{\tau^r_{\cal{D}}\leq t^r_f,X^r_{\tau_D^r}\in\s_r\}).$$
As we now show, this probability is the same as that of the original chain exited the domain no later than the truncation and the final time. The key observation is that whenever $X^r$ leaves the truncation, it becomes trapped in whichever state it just entered. This implies that if $X^r$ has not left the domain by the time it exits the truncation, then it never will. Formally, it follows from Lemma \ref{hitdc} and \eqref{eq:yrsr} in the Supplementary Material that
$$\{\tau_r<\tau_\cal{D}^r\leq t^r_f\}=\{X^r_{\tau_\cal{D}^r}=X^r_{\tau_r},\tau_r<\tau_\cal{D}^r\leq t^r_f\},$$
(recall that Lemma \ref{samechainsh} implies that $X^r$ and $X$ exit the truncation at the same time $\tau_r$). However, the latter set must be the empty set since $X^r_{\tau_{r}(\omega)}(\omega)$ (resp. $X_{\tau^r_\cal{D}(\omega)}(\omega)$) lies inside (resp. outside) of the domain in order for $X^r$ to exit the truncation before it exits the domain ($\tau_{r}(\omega)<\tau^r_{\cal{D}}(\omega)$). Thus, 
$$\{\tau^r_{\cal{D}}\leq t^r_f,X^r_{\tau_D^r}\in\s_r\}=\{\tau^r_{\cal{D}}\leq t^r_f,\tau^r_{\cal{D}}\leq \tau_r,X^r_{\tau_D^r}\in\s_r\}=\{\tau^r_{\cal{D}}\leq t^r_f\wedge\tau_r\}$$
Since \eqref{eq:sameyt} implies $\{\tau^r_{\cal{D}}\leq t^r_f\wedge\tau_r\}=\{\tau\leq t^r_f\wedge\tau_r\}$, the result follows. 

$\ref{th:etfsp_iii}$ As $\ref{th:etfsp_i}$ shows that $\mu-\mu^r$ and $\nu-\nu^r$ are unsigned measures, \eqref{eq:merr}--\eqref{eq:nerr} follow from the fact that the total variation norm of a unsigned measure is its mass, $\ref{th:etfsp_ii}$, the definitions \eqref{eq:nrdef}--\eqref{eq:mrdef2} of $\mu^r$ and $\nu^r$, and the expression for the masses of $\mu$ and $\nu$ in \eqref{eq:mumass}--\eqref{eq:numass}.

$\ref{th:etfsp_iv}$ This follows directly from $\ref{th:etfsp_i}$ and \eqref{eq:merr}--\eqref{eq:nerr}.

$\ref{th:etfsp_v}$ Because Lemmas \ref{tautin} and \ref{hitdc} imply that
$$\lim_{r\to\infty}1_{\{\tau\leq t^r_f\wedge \tau_r\}}=1_{\{\tau<\infty\}},\qquad \lim_{r\to\infty}\tau\wedge t^r_f\wedge \tau_r=\tau\wedge T_\infty, \qquad\text{almost surely},$$
the convergence follows from the monotone convergence theorem and \eqref{eq:merr}--\eqref{eq:nerr}.
\end{proof}

\subsection*{Bounding the marginals} Using the ETFSP scheme we also obtain converging approximations of the marginals $\mu_T$, $\mu_S$, and $\nu_S$ of the exit distribution and occupation measure (see \eqref{eq:mutc}--\eqref{eq:nusc}). In particular, marginalising \eqref{eq:md2}--\eqref{eq:md22}, we obtain approximations of $\mu_T$, $\mu_S$, and $\nu_S$:
\begin{align}\mu^r_T(B):&=\mu^r(B,\cal{S})=\gamma(\cal{D}^c\cap\s_r)\delta_0(B)+\int_B \mu^r_T(t)dt,\qquad &\forall B\in\cal{B}([0,\infty)),\label{eq:tmarap}\\
 \mu^r_S(x):&=\mu^r([0,\infty),x),\qquad \nu^r_S(x):=\nu^r([0,\infty),x),\qquad &\forall x\in\s,\label{eq:smarap}
\end{align}
where $\mu^r_T(t):=\sum_{x\in\s_r}\mu^r(t,x)$. The fact that $\mu^r(dt,x)$ and $\nu^r(dt,x)$ bound from below the exit distribution and occupation measure (Theorem \ref{etfspthrm}~$(i)$) implies that the marginals of the approximations $\mu^r_T(dt)$, $\mu^r_S(x)$, and $\nu^r_S(x)$ bound $\mu_T(dt)$, $\mu_S(x)$, and $\nu_S(x)$ from below. For this reason, the fact that the total variation norm of an unsigned measure is its mass implies that
\begin{equation}\label{eq:marerr}\norm{\mu_T-\mu^r_T}=\norm{\mu_S-\mu^r_S}=\norm{\mu-\mu^r},\qquad\norm{\nu_S-\nu^r_S}=\norm{\nu-\nu^r},\end{equation}
In other words, the errors of the marginal approximations are the same as those of the complete approximations. In full, we have the following corollary of Theorem \ref{etfspthrm}:
\begin{corollary} \label{cormar}Suppose that the premise of Theorem \ref{etfspthrm} is satisfied. Consider the approximations of the marginals $\mu^r_T$, $\mu^r_S$ and $\nu^r_S$ defined in \eqref{eq:tmarap}--\eqref{eq:smarap}.
\begin{enumerate}[label=(\roman*)]
\item(Increasing sequence of lower bounds) 
\label{eq:marginal_i}
The approximations form an increasing sequence of lower bounds:
%
%
\begin{align*}&\mu^0_T(t)\leq \mu^1_T(t)\leq \dots \leq \mu_T(t), &\forall t\in[0,\infty)   \\
&\mu^0_S(x)\leq \mu^1_S(x)\leq \dots \leq \mu_S(x),\qquad \nu^0_S(x)\leq \nu^1_S(x)\leq \dots \leq \nu_S(x),&\forall x\in\s.\end{align*}
\item (Computable error bounds and monotonicity properties) The equalities and inequalities in \eqref{eq:merr}--\eqref{eq:nerrmon} hold identically if we replace $\mu,\mu^r$ with $\mu_T,\mu_T^r$ (or $\mu_S,\mu_S^r$) and $\nu,\nu^r$ with $\nu_S,\nu_S^r$.
\item(Convergence of bounds) Suppose that $\cup_{r}\s_r=\s$ and that $t^r_f\to\infty$ as $r\to\infty$. The approximations of the marginals of the exit distribution converge:
$$\lim_{r\to\infty}\norm{\mu_T-\mu^r_T}=\lim_{r\to\infty}\norm{\mu_S-\mu^r_S}=0.$$
Furthermore, if $\Ebl{\tau\wedge T_\infty}<\infty$, then the approximation of the space marginal of the occupation measure converges:
$$\lim_{r\to\infty}\norm{\nu_S-\nu^r_S}=0.$$
\end{enumerate}
\end{corollary}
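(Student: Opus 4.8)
The plan is to deduce the corollary directly from Theorem \ref{etfspthrm} by exploiting the relation \eqref{eq:marerr}, namely that the total variation error of a marginal approximation equals that of the full approximation. First I would establish the lower-bound statements in part \ref{eq:marginal_i}: since $\mu^r(dt,x)$ and $\nu^r(dt,x)$ bound $\mu(dt,x)$ and $\nu(dt,x)$ from below as unsigned measures (Theorem \ref{etfspthrm}\ref{th:etfsp_i}), and marginalisation is a positive, monotone operation on measures, each of $\mu^r_T$, $\mu^r_S$, $\nu^r_S$ inherits the monotone-increasing lower-bound property. Concretely, $\mu^r_T(t)=\sum_{x\in\s_r}\mu^r(t,x)$ is nondecreasing in $r$ term-by-term and summand-wise bounded above by $\mu_T(t)=\sum_{x\in\s}\mu(t,x)$ by Tonelli; the space marginals are handled identically by integrating the density inequalities in $t$ over $[0,\infty)$ (using \eqref{eq:md1}--\eqref{eq:md11} to include the $\gamma(\cal{D}^c)$ atom for $\mu_S$) and restricting the sum to $\s_r$.

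Next I would prove part (ii). Because $\mu-\mu^r$, $\nu-\nu^r$ are unsigned measures (again Theorem \ref{etfspthrm}\ref{th:etfsp_i}), so are their marginals $\mu_T-\mu^r_T$, $\mu_S-\mu^r_S$, $\nu_S-\nu^r_S$; hence each has total variation norm equal to its mass, and the mass of a marginal equals the mass of the original measure. This yields \eqref{eq:marerr}, and substituting into the equalities and inequalities of Theorem \ref{etfspthrm}\ref{th:etfsp_iii}--\ref{th:etfsp_iv} gives the claimed identities and monotonicity statements verbatim, with the sharpness characterisation for \eqref{eq:merr} carrying over unchanged. Part (iii) is then immediate from \eqref{eq:marerr} together with Theorem \ref{etfspthrm}\ref{th:etfsp_v}: $\norm{\mu_T-\mu^r_T}=\norm{\mu_S-\mu^r_S}=\norm{\mu-\mu^r}\to0$ when $\cup_r\s_r=\s$ and $t^r_f\to\infty$, and $\norm{\nu_S-\nu^r_S}=\norm{\nu-\nu^r}\to0$ under the additional hypothesis $\Ebl{\tau\wedge T_\infty}<\infty$.

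The corollary is essentially a bookkeeping consequence of the theorem, so there is no substantial obstacle; the only point requiring a little care is the justification of \eqref{eq:marerr}, i.e., verifying that pushing forward an unsigned measure along the projections $[0,\infty)\times\s\to[0,\infty)$ and $[0,\infty)\times\s\to\s$ preserves the total-variation norm. This follows because the pushforward of an unsigned (non-negative) measure is again non-negative, the total variation norm of a non-negative measure is its total mass, and the total mass is invariant under pushforward. One should also note explicitly that the atom $\gamma(\cal{D}^c\cap\s_r)\delta_0$ appearing in $\mu^r_T$ and the corresponding $1_{\cal{D}^c\cap\s_r}(x)\gamma(x)$ term in $\mu^r_S$ match the analogous terms in $\mu_T$ and $\mu_S$ up to the same truncation discrepancy already accounted for in $\norm{\mu-\mu^r}$, so no new error terms are introduced.
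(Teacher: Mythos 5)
Your proposal is correct and follows essentially the same route as the paper: it establishes the identity \eqref{eq:marerr} by noting that the marginals of the unsigned measures $\mu-\mu^r$ and $\nu-\nu^r$ are again unsigned with total variation equal to their (preserved) mass, and then transfers Theorem~\ref{etfspthrm}\ref{th:etfsp_i}, \ref{th:etfsp_iii}--\ref{th:etfsp_v} to the marginal approximations. The paper's proof is just a terser statement of this same bookkeeping argument, with the justification of \eqref{eq:marerr} given in the surrounding text.
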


\begin{proof}Given \eqref{eq:marerr}, the corollary follows immediately from Theorem \ref{etfspthrm}.
\end{proof}

\section{Applications}\label{applications} 
\begin{figure*}
 \centering
 \includegraphics[width=\textwidth]{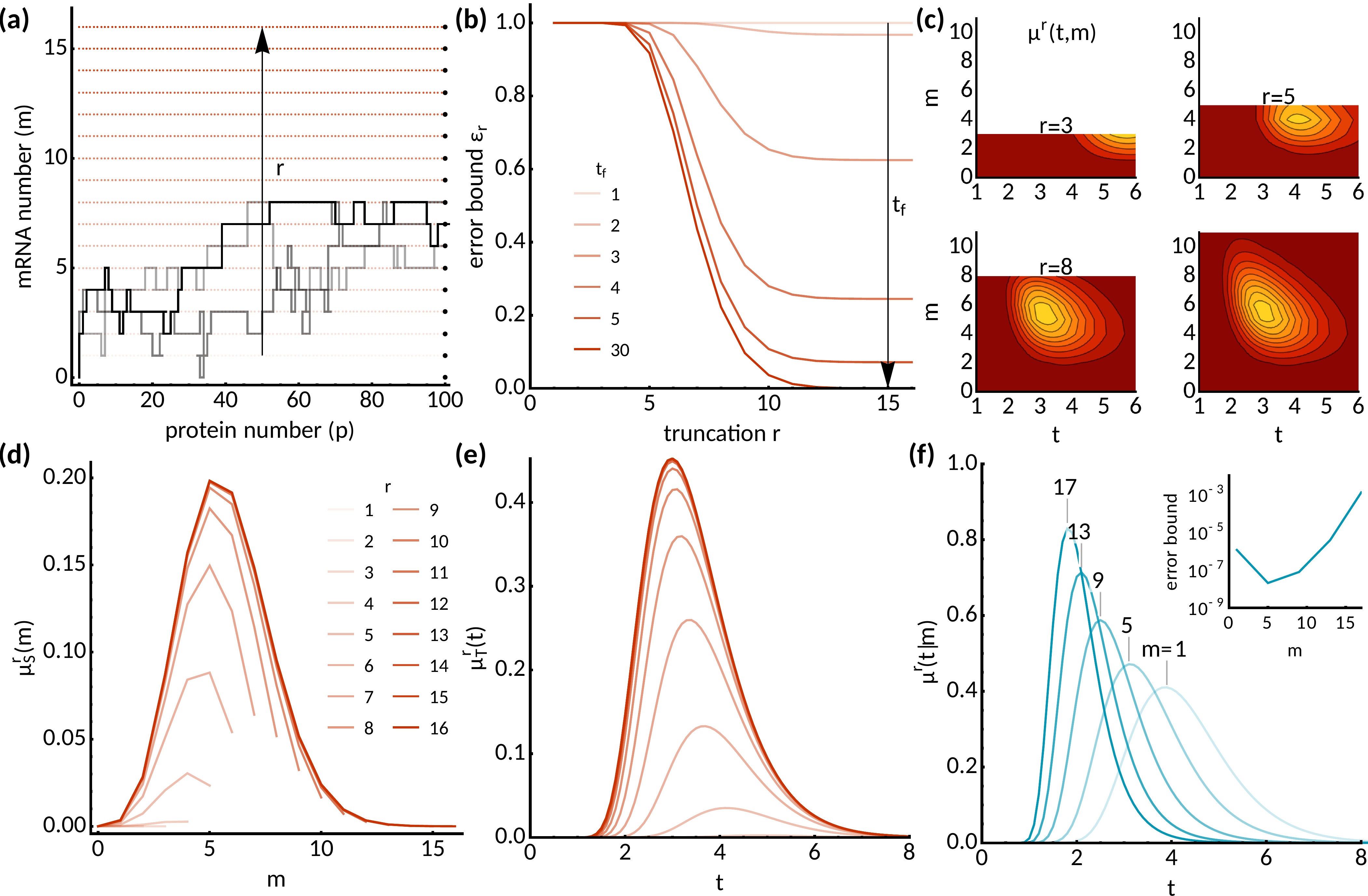}
 \caption{\textbf{Threshold statistics for stochastic gene expression.} 
 \textbf{(a)} Three representative sample paths (light grey, grey, and black lines) of the gene expression model~\eqref{eq:transcription} exiting at $p_c=100$ protein molecules. The red dashed lines indicate the boundaries of the truncations $\mathcal{S}_r$ with increasing $r$.
 \textbf{(b)} The error bound $\varepsilon_r$ decreases with increasing truncation parameter $r$ and final computation time $t_f^r$.
 \textbf{(c)} Lower bounds on the exit distribution for the truncations $r=3,5,8,16$  (yellow indicates maximum probability panel-wise). 
\textbf{(d)} The lower bounds on the mRNA-marginal exit distribution are monotonically increasing with $r$ and become visually indistinguishable for $r>12$. \textbf{(e)} Corresponding bounds on the exit time density. 
 \textbf{(f)} Bounds on the conditional exit time distributions $\mu_T(t|m)$ with $r=20$; inset shows error bounds (\ref{eq:boundgene}). Parameters: $k_1=5$ $k_2=1$, $k_3=10$, $k_4=0.1$ and initial condition $\gamma(x)=1_0(m)1_0(p)$ for all panels.}
 \label{fig:gene}
\end{figure*}
In this section, we apply the ETFSP scheme to two biological examples from the literature. 
To simplify the exposition, we assume without loss of generality that the chain starts inside the domain: $\Pbl{\{X_0\in\cal{D}\}}=\gamma(\cal{D})=1$.

\subsection{Threshold model for stochastic gene expression}

Proteins perform essential functions inside living cells. These molecules are expressed from genes through a series of biochemical reactions, 
and their absolute levels (and the timings in which these are reached) are critical to cell decisions, such as differentiation \cite{Dandach2010} or lysis in the bacteriophage $\lambda$ \cite{singh2014}. Let us consider a simple model of gene expression involving the transcription and degradation of mRNA molecules (with rates $k_1$ and $k_2$, respectively), the synthesis of a protein from each mRNA molecule (with rate $k_3$), and the degradation of proteins (with rate $k_4$):
\begin{align}
\label{eq:transcription}
\varnothing \xrightarrow[]{k_1} \text{mRNA} \xrightarrow[]{k_2} \varnothing, \qquad
 \text{mRNA} \xrightarrow[]{k_3} \text{mRNA} + \text{Protein},\qquad   \text{Protein} \xrightarrow[]{k_4} \varnothing.
 \end{align}
The state of the system is $x=(m,p)$, where $m$ is the number of mRNAs and $p$ is the number of proteins; hence the state space is $\s =\n^2$. The reactions obey mass-action kinetics and the rate matrix is given by
$$q\left((m_1,p_1),(m_2,p_2)\right) = \left\{\begin{array}{ll}-k_1-k_2m_1-k_3m_1-k_4p_1 &\text{if }(m_2,p_2)=(m_1,p_1)\\k_1&\text{if }(m_2,p_2)=(m_1+1,p_1)\\ k_2m_1&\text{if }(m_2,p_2)=(m_1-1,p_1)\\k_3m_1&\text{if }(m_2,p_2)=(m_1,p_1+1)\\ k_4p_1&\text{if }(m_2,p_2)=(m_1,p_1-1)\\0&\text{otherwise}\end{array}\right..$$

We are interested in characterising the time taken for the protein number $p$ to attain a critical level $p_c$. To this end, we consider the domain
\begin{align*}
 \mathcal{D} := \{(m,p)\in\n^2:p<p_c\},
\end{align*}
so that the exit time $\tau$ from the domain $\mathcal{D}$ correspond to the first instant at which $p_c$ proteins accumulate.
We compute the lower bounds $\mu^r(t,(m,p_c))$
of the exit distribution 
$\mu(t,(m,p_c))$, the joint distribution of the exit time and the number of mRNAs present at exit. For ease of notation, in the rest of this section, we omit the protein number argument (as it is $p_c$ at time $\tau$), and we write $(m,p_c)$ as $m$. 

We use the truncations
\begin{align*}
 \mathcal{S}_r=\{(m,p)\in \n^2:p \leq p_c, \, m < r\}\qquad\forall r\in\n,
\end{align*}
shown in Fig.~\ref{fig:gene}(a).  
Fig.~\ref{fig:gene}(b) shows how the error bound $\varepsilon_r$ 
decreases to zero with $r$ and $t^r_f$,
whereas Fig.~\ref{fig:gene}(c) shows the lower bounds $\mu^r(t,m)$ for various values of the truncation parameter $r$ and $t^r_f=30$ (with $\varepsilon_r<10^{-4}$ for $r=16$).

The exit time correlates negatively with the level of mRNA: the more mRNA molecules are present, the higher the expression, and the quicker the protein number rises. Figs.~\ref{fig:gene}(d) and (e) show the corresponding lower bounds $\mu^r_S(m)$ and $\mu^r_T(t)$ on the space and time marginals $\mu_S(m)$ and $\mu_T(t)$, respectively.

To gain a quantitative understanding of the anti-correlation between the exit time and mRNA numbers, we also compute the density of the exit time conditioned on the number of mRNA present:
\begin{align}\label{eq:condgene}
  \mu(t|m) := \frac{\mu(t,m)}{\mu_S(m)} \ge \frac{\mu^r(t,m)}{\mu^r_S(m)+\varepsilon_r}=:\mu^r(t|m).
\end{align}
The bound on the right-hand side follows from the fact that $\mu_S(m)$ is no greater than $\mu^r_S(m)+\varepsilon_r$ due to the definition of the total variation norm. 
Integrating both sides of~\eqref{eq:condgene}, we obtain the following bound on the total variation distance between the conditional density and its approximation: 
\begin{align}
 \label{eq:boundgene}
 \norm{\mu(\cdot|m)-\mu^r(\cdot|m)}\leq \frac{\varepsilon_r}{\mu_S^r(m)+\varepsilon_r}.
\end{align}
Fig.~\ref{fig:gene}(f) shows this density computed using $r=20$ for various values of $m$. As expected, the mode of the distribution decreases with increasing mRNA number but, interestingly, the density also narrows with increasing $m$. In the inset of Fig.~\ref{fig:gene}(f) we verify that the approximation error is small for each $m$.

\subsection{Fixation statistics in population dynamics}
The ETFSP framework can be used to provide insights into the fixation (or extinction) statistics of competing populations with small numbers. Common models in ecology and evolution are of the Lotka-Volterra type \cite{constable2017}. Let us consider the population dynamics of two competing species $S_1$ and $S_2$: 
\begin{align}
\label{eq:fixation_stochastics}
 S_i \xrightarrow[]{b_i/K} 2 S_i, \qquad  S_i \xrightarrow[]{d_i/K} \varnothing, \qquad  S_i + S_j \xrightarrow[]{c_{ij}/K^2} S_j,\qquad\forall i,j\in\{1,2\},
\end{align}
with state space $\s:=\n^2$. The first and second reactions describe the birth and death of individuals with rates $b_i>0$ and $d_i>0$, respectively. The third reaction describes intra- and inter-species competition of strength $c_{ij}>0$. The parameter $K>0$ is the effective carrying capacity. Let us denote the numbers of individuals by $x=(x_1,x_2)$. The rate matrix is given by 
$$q(x,y)=\left\{\begin{array}{ll}-\sum_{i=1}^2 \left(w^+_i(x)+w^-_i(x)+w_{i1}(x)+w_{i2}\right)&\text{if }y=x\\ w^+_i(x)&\text{if }y=x+e_i\enskip \forall i\in\{1,2\}\\ w^-_i(x)+w_{i1}(x)+w_{i2}&\text{if }y=x-e_i\enskip \forall i\in\{1,2\}\\0&\text{otherwise}\end{array}\right.,$$
where $e_1:=(1,0)$, $e_2:=(0,1)$ and
$$  w_{i}^+(x) := b_i \frac{x_i}{K},\qquad  w_{i}^-(x) := d_i \frac{x_i}{K} ,\qquad w_{ij}(x):= c_{ij} \frac{x_ix_j}{K^2},\qquad\forall i,j\in\{1,2\}.$$
For simplicity, we fix $c_{11}=c_{12}=c_{21}=c_{22}=1$ and $K=30$. 

\paragraph{Deterministic dynamics}

\begin{figure*}
 \centering
 \includegraphics[width=0.9\textwidth]{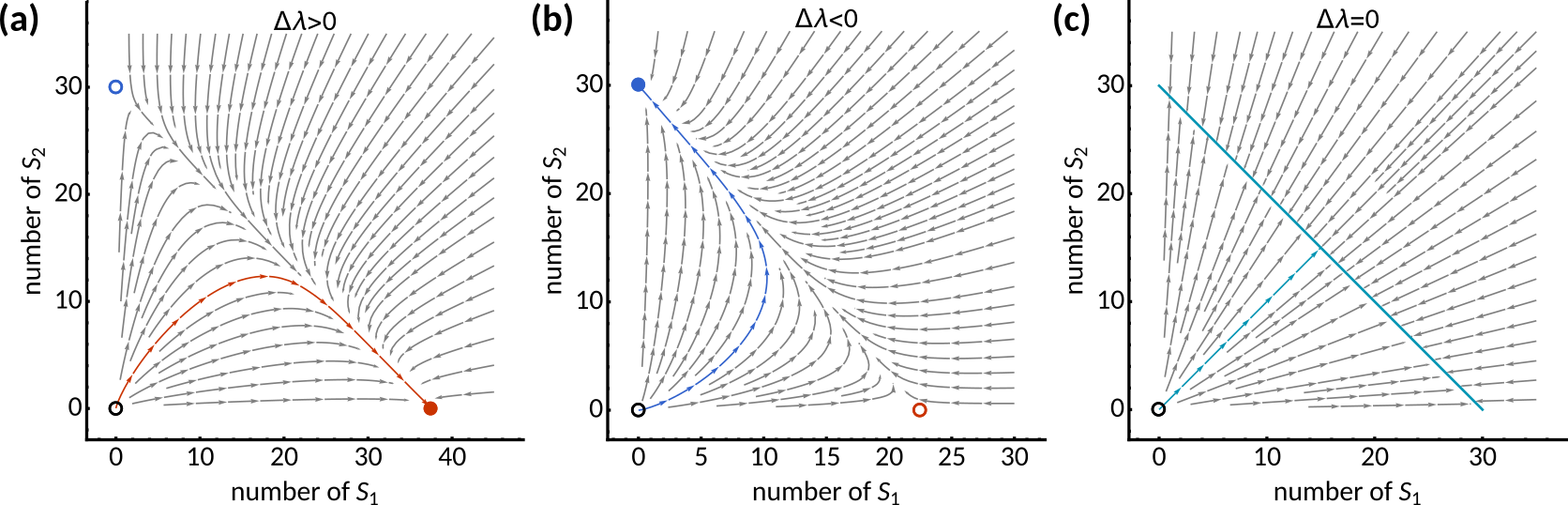}
 \caption{\textbf{Dynamics of the deterministic fixation model.} Phase portraits of the ODE model of two competing species~\eqref{eq:deterministic_pop} for different values of the growth rate difference $\Delta \lambda $. 
 \textbf{(a,b)} If $\Delta\lambda \neq 0$, the system has a single stable fixed point corresponding to the fixation of the species with the highest growth rate. Filled dots denote stable fixed points; open circles denote unstable fixed points. \textbf{(c)} For equal growth rates ($\Delta\lambda=0$), the dynamics approaches a line of fixed points, representing coexistence of the two species (neutral case).}
 \label{fig:lv1}
\end{figure*}

The deterministic dynamics of the populations is modelled with the set of ODEs:
\begin{align}
\label{eq:deterministic_pop}
{\dot x_i} = w_i^+(x)-w_i^-(x) - w_{i1}(x)-w_{i2}(x) \qquad\forall i=1,2.
\end{align}
The equilibrium $(x_1,x_2)=(0,0)$ representing the extinction of both populations is unstable. Fixation of $S_1$ occurs when $S_2$ goes extinct and vice versa, i.e., when the dynamics approaches one of the two axes. Which of the two species becomes extinct depends on the growth rate difference:
\begin{align}
\Delta\lambda:=\lambda_1-\lambda_2 \quad \text{where} \quad
\lambda_i=b_i-d_i, \qquad \forall i,j\in\{1,2\}.
\label{eq:net_growth}
\end{align}
For $\Delta\lambda>0$ (Fig.~\ref{fig:lv1}(a)), there is an unstable fixed point on the $x_1=0$ axis and a stable one on the $x_2=0$ axis; hence the trajectories approach the stable fixed point leading to fixation of $S_1$. For $\Delta\lambda<0$, 
the situation is reversed resulting in the fixation of $S_2$ (Fig.~\ref{fig:lv1}(b)). For equal growth rates ($\Delta\lambda=0$), the dynamics approaches an invariant manifold (a line of fixed points) on which the two species coexist with ratios depending on their initial populations (Fig.~\ref{fig:lv1}(c)).

\paragraph{Computation of fixation probabilities and times}

\begin{figure*}
 \centering
 \includegraphics[width=0.9\textwidth]{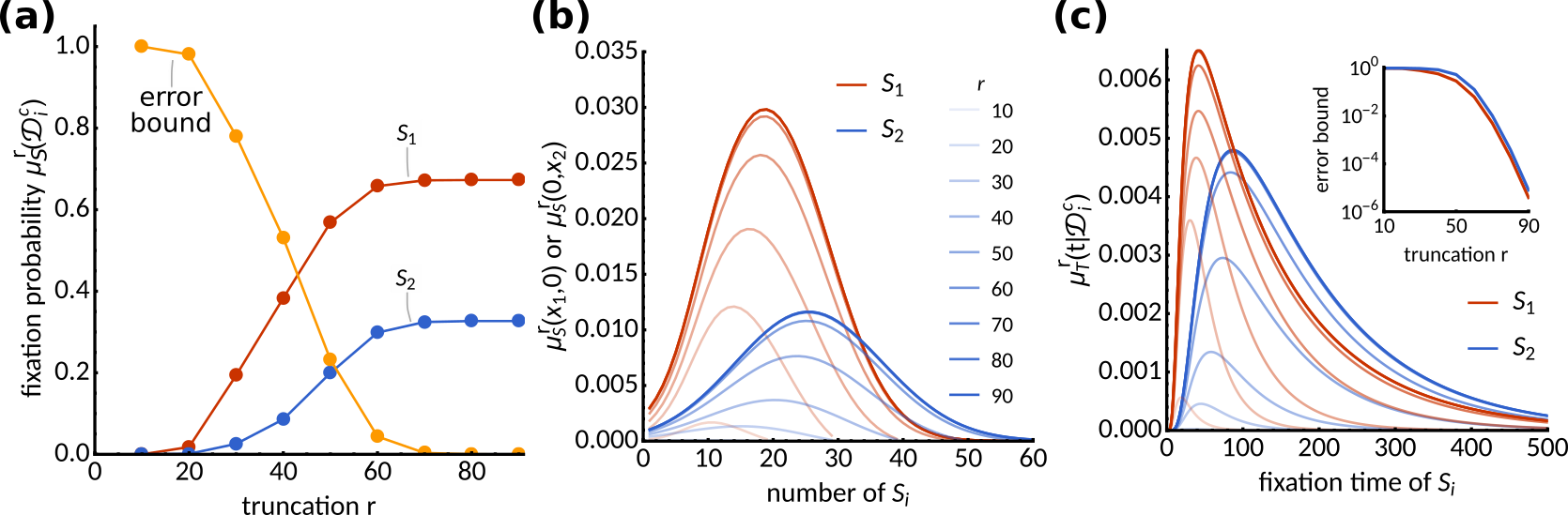}
 \caption{\textbf{Computation of fixation probabilities and times.} 
 \textbf{(a)} The lower bounds on the fixation probabilities converge (red $S_1$, blue $S_2$) with increasing truncation parameter $r$, and the error bound (yellow) approaches zero. \textbf{(b)} The lower bounds on the exit location distributions converge with increasing $r$ (from light to dark). \textbf{(c)} The lower bounds on the density of fixation times also converge. Parameters: $b_1 = 2$, $b_2 = 5$, $d_1=1$, $d_2=4$, $\Delta\lambda=0$.
 }
 \label{fig:lv2}
\end{figure*}

In the stochastic setting, both species $S_1$ and $S_2$ have non-zero probability of becoming fixed regardless of the value of $\Delta\lambda$.
To study this phenomenon, we consider the exit time from the domain $$\mathcal{D} = \{ (x_1,x_2)\in\n^2 \, | \, x_1>0, x_2>0 \},$$ with complement $\mathcal{D}^c$ that can be decomposed into the disjoint subsets
$$\mathcal{D}^c_1:=\{ (x_1,0)\in\n^2:x_1>0 \}, \qquad  \mathcal{D}^c_2=\{ (0,x_2)\in\n^2:x_2>0 \},$$
representing, respectively,  the fixation of $S_1$ and of $S_2$,
and a third subset $\{(0,0)\}$ 
representing the extinction of both species. 

We compute lower bounds $\mu^r_S(\cal{D}^c_1)$ and $\mu^r_S(\cal{D}^c_2)$ 
on the fixation probabilities 
using the ETFSP scheme 
and the truncations
$$\mathcal{S}_r=\{(x_1,x_2)\in \n^2: x_1+x_2\le r\},$$
with final computation time $t^r_f:=3000$, and initial condition $\gamma=1_{(10,10)}$.

In Fig.~\ref{fig:lv2}(a), the results for the neutral case $\Delta\lambda=0$ with different death rates $d_1<d_2$ show that the error bound $\varepsilon_r$ decreases with $r$ and can be made arbitrarily small. 
However, in contrast with the deterministic case, 
$S_1$ fixes with higher probability and 
the fixation dynamics does not depend only on the growth rate difference $\Delta \lambda$ but also on the difference in death rates $\Delta d := d_1 - d_2$.  This demographic noise 
drives the species with the higher death rate ($S_2$) to extinction more frequently~\cite{constable2016}.

To study this effect, we consider the distribution of $S_1$ or $S_2$ upon fixation (Fig.~\ref{fig:lv2}(b)). The probabilities of exiting either through the states $x_1$ in $\mathcal{D}^c_1$ or the states $x_2$ in $\mathcal{D}^c_2$ are bounded by
\begin{align*}
\mu_S(x_1,0) \ge \mu^r_S(x_1,0),\qquad \mu_S(0,x_2) \ge \mu^r_S(0,x_2).
\end{align*}
As shown in Corollary~\ref{cormar}, the bounds $\mu^r_S(x_1,0)$ and $\mu^r_S(0,x_2)$ increase monotonically in $r$ and converge---in our numerics, the approximations are visually indistinguishable for $r>60$. Note that the exit location distributions are wide and not clearly peaked around the intersections of $\mathcal{D}_1^c$ and $\mathcal{D}_2^c$ with the deterministic manifold. Indeed, the $S_1$-exit location distribution peaks at smaller values than deterministically plausible due to higher demographic noise along the direction of $S_2$ disturbing the dynamics away from the deterministic stable manifold.

To characterise the time at which either fixation occurs, we compute bounds on the fixation time densities. The fixation time of $S_1$ (resp. $S_2$) is the exit time conditioned on $S_1$ (resp. $S_2$) fixing and its density is given by
\begin{equation}\label{eq:fixtimes}
 \mu_T(t|\cal{D}^c_i):=\frac{\mu(t,\mathcal{D}^c_i)}{\mu_S(\mathcal{D}^c_i)} \ge \frac{\mu^r(t,\mathcal{D}^c_i)}{\mu^r_S(\mathcal{D}^c_i)+\varepsilon_r}=:\mu_T^r(t|\cal{D}^c_i)\qquad \forall i=1,2.
 \end{equation}
Fig.~\ref{fig:lv2}(c) shows that the bounds on the conditional densities are monotonically increasing, whereas the inset shows that  
the bound of the approximation error
$$\norm{\mu_T(\cdot|\cal{D}^c_i)-\mu_T^r(\cdot|\cal{D}^c_i)}\leq \frac{\varepsilon_r}{\mu^r_S(\mathcal{D}^c_i)+\varepsilon_r}.$$
decreases with $r$.

\paragraph{The effects of demographic noise}

\begin{figure*}
 \centering
 \includegraphics[width=0.9\textwidth]{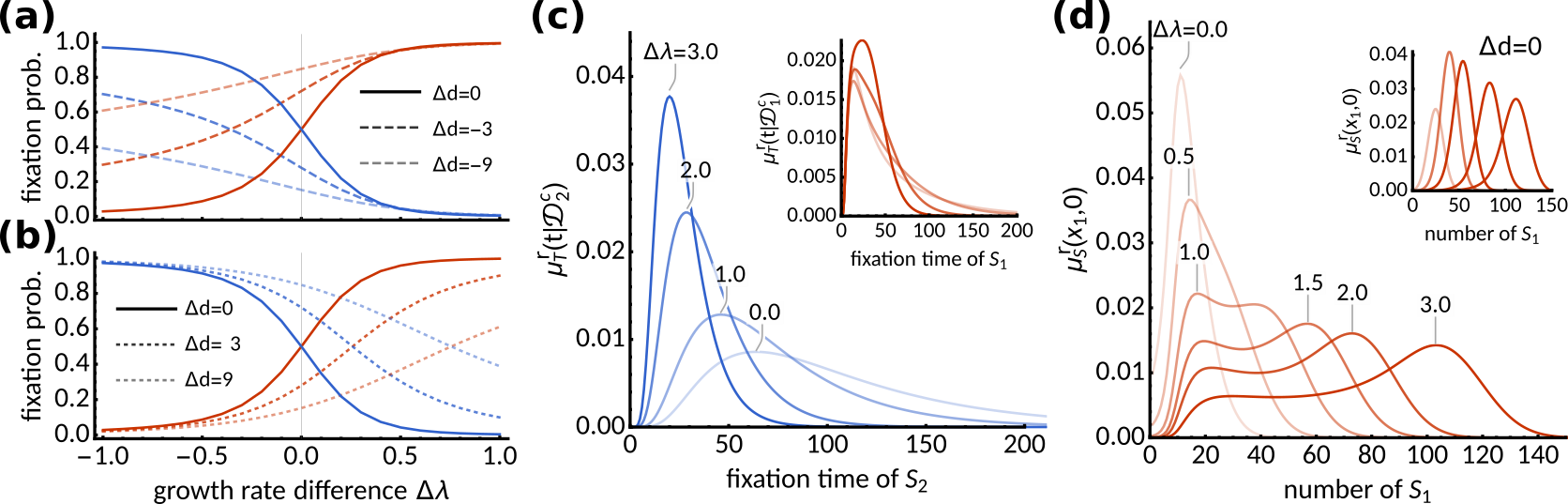}
 \caption{\textbf{Fixation in the presence of demographic noise.} \textbf{(a,b)} Fixation probabilities as a function of the growth difference $\Delta\lambda$ for various $\Delta d$ values with: $d_1=1$ in (a); $d_2=1$ in (b)). Red lines, $S_1$; blue lines, $S_2$. \textbf{(c)} The density of $S_2$ fixation times for different $\Delta \lambda$ values with $d_1=1, d_2=10$. Inset shows the corresponding densities of $S_1$. \textbf{(d)} Distribution of $S_1$ individuals at fixation with $d_1=1, d_2=10$ and varying $\Delta\lambda$. Inset shows $d_1=d_2=1$ case (no demographic noise). All computations carried out with $r=200$ ($20301$ states) ensuring that $\varepsilon_{200}<10^{-8}$. Birth rates: $b_1= 1+\Delta\lambda+d_1$ and $b_2=1+d_2$.}
 \label{fig:lv3}
\end{figure*}

Using ETFSP with a large truncation, we investigate how the behaviour of the model depends on the growth rate difference $\Delta\lambda$ and the death rate difference $\Delta d$, a measure of demographic noise. 
In the absence of demographic noise ($\Delta d=0$), as in the deterministic case, the fixation of $S_1$ is favoured if $\Delta \lambda>0$, and the converse is true if $\Delta \lambda<0$ (Fig.~\ref{fig:lv3}(a)). However, an increase in the demographic noise of $S_2$ ($\Delta d<0$) leads to a higher fixation probability of $S_1$. If the demographic noise is large enough, the fixation of $S_1$ becomes favoured even if $\Delta\lambda<0$. Conversely, Fig.~\ref{fig:lv3}(b) shows that increasing the demographic noise of $S_1$ ($\Delta d>0$) favours fixation of $S_2$ over $S_1$ even if $\Delta \lambda>0$. 

Next, we focus on the case $\Delta \lambda>0, \, \Delta d<0$ where the most likely outcome is consistent with the deterministic case (i.e., $S_1$ is more likely to fix). Fig.~\ref{fig:lv3}(c) shows the density of fixation times computed using \eqref{eq:fixtimes} for $S_1$ and $S_2$ as a function of $\Delta\lambda$. The density of fixation times becomes narrower with increasing $\Delta \lambda$ (inset) indicating that large growth rate differences attenuate the stochasticity. Despite the fixation of $S_2$ being less likely than that of $S_1$, the time required for this event decreases with $\Delta\lambda$: the mode of the conditional distribution $\mu_T(t|\cal{D}^c_2)$ shifts to smaller times.

Fig.~\ref{fig:lv3}(d) shows that the distribution of $S_1$ individuals at fixation is bimodal for moderate values of $\Delta\lambda$ and remains broad for larger values. This is the result of strong demographic fluctuations in the direction of $S_2$ such that fixation of $S_1$ can occur at small population size. 
If no demographic noise is present ($\Delta d=0$), the distributions are unimodal (inset) and considerably narrower regardless of the value of $\Delta \lambda$. 
In summary, demographic noise significantly alters the dynamics of small populations and can even reverse the direction of fixation predicted by deterministic models.

\section{Discussion}\label{conclu} 
In this paper, we have introduced and characterised the ETFSP scheme, which yields converging approximations of the exit distribution and occupation measure associated with the exit from a domain of continuous-time Markov chains. The ETFSP scheme consists of solving the system of coupled linear ODEs \eqref{eq:nrdef}--\eqref{eq:mrdef} and yields approximations of the desired measures.
The total variation distance between the exit distribution and its approximation is bounded by one minus the mass of the approximation. Hence the quality of the approximation can be evaluated with no extra effort than that required for its computation.

We have considered minimal chains, i.e., those that do not explode or those that are killed off after exploding. 
A distinction arises for non-minimal chains, which are re-initialised after exploding~\cite{Chung1967,Freedman1983,Rogers2000a}. In this case, the FSP and ETFSP still yield monotonically increasing lower bounds on the relevant measures and the computable error bounds hold identically. However, they do not converge to the measures associated with non-minimal chains but to those associated with minimal chains (Theorems \ref{etfspthrm} and \ref{fspthrm}). The details pertinent to non-minimal chains are left as future work. Although we have not discussed time-inhomogeneous chains relevant in some applications \cite{Voliotis2016,Dattani2017}, we anticipate that both the ETFSP and FSP schemes apply identically when the rate matrix $Q$ is replaced with its time-inhomogeneous analogue.

There are several issues worth considering for the application of the ETFSP scheme. Chief among them is the fact that the number of states often grows quickly with the desired accuracy resulting in large systems of ODEs. Resource-efficient implementations of the FSP scheme have been developed to tackle this issue and can be adapted to the ETFSP setting (see \cite{Dinh2016} and references therein). 
To do so, notice that \eqref{eq:nrdef} is the set of ODEs obtained by applying the FSP scheme to \eqref{eq:laweqs} with $\cal{D}$ replacing $\s$. In other words, $\nu^r(t,\cdot)$ is $\gamma_{\cal{D}_r}\exp(tQ_{\cal{D}_r})$, where 
$\gamma_{\cal{D}_r}$ and $Q_{\cal{D}_r}$ are restrictions 
to the truncated domain $\cal{D}_r$. 
The corresponding approximation of the exit distribution is then obtained by rewriting \eqref{eq:mrdef} as
\begin{equation}\label{eq:mudefalt}\mu^r(t,x)=\sum_{y\in\cal{D}_r}\nu^r(t,y)q(y,x),\qquad\forall x\in\s_r\cap\cal{D}^c.\end{equation}
The rapid growth in the number of states can also be mitigated by guiding the truncation choice using simulation-based criteria \cite{Munsky2007,Sidje2015}, moment bounds and Markov's inequality to obtain a priori error bounds~\cite{Kuntzthe,Kuntz2017}, or other state space exploration techniques (see \cite{DeSouzaeSilva1992,Dinh2016} and references therein). For cases where there are too many important states for ETFSP to handle, Galerkin methods \cite{engblom2009,Engblom2009a,Jahnke2010} could be adapted to the exit time setting using Theorem \ref{charactt}.

Solving \eqref{eq:nrdef}--\eqref{eq:mrdef} numerically introduces an additional source of error~\cite{Moler2003}. A simple way to control this error is to apply randomisation techniques \cite{Gross1984,Dinh2016} on \eqref{eq:mrdef} to obtain lower bounds $\nu^r$, and using \eqref{eq:mudefalt} to compute lower bounds on $\mu^r$. The error bounds in Theorem~$\ref{etfspthrm} \ref{th:etfsp_iii}$ hold if $\mu^r$ and $\nu^r$ are replaced with their lower bounds.

Numerically solving the ODEs \eqref{eq:nrdef} to obtain $\nu^r$ and performing the matrix-vector multiplication in \eqref{eq:mudefalt} often leads to an accumulation of errors in $\mu^r$. We circumvented this issue using an adaptive ODE solver~\cite{Hindmarsh2005} to solve the joint system \eqref{eq:nrdef}--\eqref{eq:mrdef}, hence ensuring that the errors of both $\nu^r$ and $\mu^r$ are taken into account by the solver. A promising alternative here is to apply Krylov methods of the type in \cite{Burrage2006} to this joint system of ODEs.

Lastly, we did not address how to bound the approximation error of the occupation measure in practice. As shown in Theorem \ref{etfspthrm}, the approximation error \eqref{eq:nerr} depends on $\Ebl{\tau\wedge T_\infty}$, which is bounded from above by 
the mean exit time $\Ebl{\tau}$.  For a broad class of chains (those with `rational rate matrices'), the mean exit time can itself be bounded using linear or semidefinite programming approaches \cite{Helmes2001,Kuntzthe}. For more general chains, one can employ Foster-Lyapunov criteria~\cite{Menshikov2014,Kuntzthe}.

In summary, the ETFSP computes converging approximations of the exit distribution and occupation measure with controlled errors. As demonstrated in Section~\ref{applications}, such highly accurate approximations can provide valuable insights into the dynamics of biochemical networks and interacting populations. Although our examples were biological, computing these measures is important to other fields, for instance, to quantify customer waiting times \cite{Melamed1984,Melamed1984a}, modelling computer-communication and transaction processing systems \cite{Bernardo2007}, computing reliability measures of complex systems \cite{Grassmann2000}, or in model checking \cite{Milios2017}. 

\vspace{10pt}
\noindent\textbf{Acknowledgements:} We thank the two anonymous referees for their helpful remarks that have significantly improved this manuscript. J.K. gratefully thanks Prof. Sophia Yaliraki for an important stint in her research group during which the material presented in this manuscript was partially developed.

\bibliographystyle{siamplain} 
\bibliography{etfspbib}

\appendix

\begin{center}\Large{\textbf{Supplementary Material}}\end{center}

\section{The Gillespie Algorithm and the proof of Lemma \ref{samechainsh}}\label{preproofs}
Given an \emph{initial condition} $Z$, we construct our Markov chain $X$ recursively by running Algorithm \ref{gilalg} below commonly known as the \emph{Gillespie Algorithm} or the \emph{stochastic simulation algorithm}. The name of the algorithm itself stems from \cite{Gillespie1976} and its origins trace back to \cite{Feller1940,Kendall1950}. In particular, the algorithm constructs the \emph{jump times} $\{T_n\}_{n\in\n}$ at which transitions occur and the jump chain $Y:=\{Y_n\}_{n\in\n}$.

\begin{algorithm}[h]
\begin{algorithmic}[1]
\STATE{$Y_0:=Z$, $T_0:=0$}
\FOR{$n=1,2,\dots$}
\STATE{sample $U_{n}\sim\operatorname{uni}((0,1))$ independently of $\{Z,\xi_1,\dots,\xi_{n-1},U_1,\dots,U_{n-1}\}$}
\STATE{sample $\xi_{n}\sim\operatorname{exp}(1)$ independently of $\{Z,\xi_1,\dots,\xi_{n-1},U_1,\dots,U_{n}\}$}
\IF{$q(Y_{n-1},Y_{n-1})\neq0$}
\STATE{$T_n:=T_{n-1}-\xi_n/q(Y_{n-1},Y_{n-1})$}
\ELSE
\STATE{$T_n:=T_{n-1}+\xi_n$}
\ENDIF
\STATE{$i:=0$}
\WHILE{$U_n>\sum_{j=0}^i\pi(Y_{n-1},x_j)$}
\STATE{$i:=i+1$}
\ENDWHILE
\STATE{$Y_n:=x_i$}
\ENDFOR
 \end{algorithmic}
 \caption{The Gillespie Algorithm on $\s=\{x_1,x_2,x_3\dots\}$}\label{gilalg}
\end{algorithm}

In this paper, we fix an underlying measurable space $(\Omega,\cal{F})$ on which $Z$, $\xi_1$, $\xi_2$, $\dots$, $U_1$, $U_2$, $\dots$ appearing in Algorithm \ref{gilalg} are defined and a probability measure $\Pb$ on $(\Omega,\cal{F})$ such that, under $\Pb$, the initial condition $Z$ has law $\gamma$, the random variable $U_n$ is uniformly distributed on $(0,1)$ for each $n\in\zp$, the random variable $\xi_n$ is exponentially distributed with unit mean for each $n\in\zp$, and the random variables $Z$, $\xi_1$, $\xi_2$, $\dots$, $U_1$, $U_2$, $\dots$ are independent. Formally, such a construction can be carried out using Theorems 12.2 and 26.1 in \cite{Rogers2000a}.

\begin{proof}[Proof of Lemma \ref{samechainsh}] Let $\bar{\Pi}$ denote the one-step matrix obtained by replacing $Q$ with $\bar{Q}$ in \eqref{eq:jumpmatrix}. To construct $\bar{X}$ we run Algorithm \ref{gilalg} employing the same $Z$, $\{\xi_n\}_{n=1}^\infty$, and $\{U_n\}_{n=1}^\infty$ as for $X$ but with $\bar{Q}$ and $\bar{\Pi}$ replacing $Q$ and $\Pi$ to obtain the chain's jump times $\{\bar{T}_{n}\}_{n\in\n}$ and jump chain $\bar{Y}:=\{\bar{Y}_{n}\}_{n\in\n}$ and then we apply \eqref{eq:cpathdef} with $\{\bar{T}_{n}\}_{n\in\n}$ and $\bar{Y}$ replacing $\{\bar{T}_{n}\}_{n\in\n}$ and $Y$. 
Because the rate matrices coincide on $\cal{D}$, \eqref{eq:jumpmatrix} implies that the jump matrices also coincide on $\cal{D}$:
$$\pi(x,y)=\bar{\pi}(x,y)\qquad \forall x\in\cal{D},\enskip y\in\s.$$
Algorithm \ref{gilalg} and the above imply that  
\begin{equation}\label{eq:nja}\bar{Y}_{n+1}(\omega)=Y_{n+1}(\omega)\text{ for all }\omega\in\Omega\text{ such that }\bar{Y}_n(\omega)=Y_n(\omega)\in\cal{D}.\end{equation}
Due to the definition of the exit times of the jump chains, we have that
\begin{align*}\sigma&=\infty\cdot 1_{\{Y_0\in\cal{D},Y_1\in\cal{D},\dots\}}+\sum_{k=1}^\infty k1_{\{Y_0\in\cal{D},\dots,Y_{k-1}\in\cal{D},Y_k\not\in\cal{D}\}},\\
 \bar{\sigma}&=\infty\cdot 1_{\{\bar{Y}_0\in\cal{D},\bar{Y}_1\in\cal{D},\dots\}}+\sum_{k=1}^\infty k1_{\{\bar{Y}_0\in\cal{D},\dots,\bar{Y}_{k-1}\in\cal{D},\bar{Y}_k\not\in\cal{D}\}}. \end{align*}
Because $Y_0 =Z= \bar{Y}_0$, combining the above expression with \eqref{eq:nja} tells us that $\sigma(\omega)=\bar{\sigma}(\omega)$ for each $\omega\in\Omega$. Since $\sigma(\omega)\geq k$ only if
$$Y_0(\omega)\in\cal{D},\quad Y_k(\omega)\in\cal{D},\quad\dots,\quad Y_{k-1}(\omega)\in\cal{D},$$
the first equation in \eqref{eq:sameyt} also follows from \eqref{eq:nja}. Using once again the fact that the rate matrices coincide on $\cal{D}$ and the definition of the jump times in Algorithm \ref{gilalg}, the second equation in \eqref{eq:sameyt} follows from the first. Lemma \ref{hitdc} then implies the second and third equations in \eqref{eq:sametime}. Putting \eqref{eq:sametime}, \eqref{eq:sameyt}, and the definition of the chains in \eqref{eq:cpathdef} together we obtain \eqref{eq:sameproc}.
\end{proof}
\section{The proof of the theoretical properties of the FSP scheme}\label{detocc} 
In the following proof, let  $X^r$ be the auxiliary chain introduced immediately after Theorem \ref{fspthrm} and $Y^r:=\{Y^r_n\}_{n\in\n}$,  $\{T^r_n\}_{n\in\n}$, and $T^r_\infty$ be its jump chain, jump times, and explosion time.

\begin{proof}[Proof of Theorem \ref{fspthrm}] Substituting $Q$ with $Q^r$ in \eqref{eq:laweqs} and comparing with \eqref{eq:fspode}, it follows that 
\begin{equation}\label{eq:xfsp}p^r_t(x)=\Pbl{\{X_t^r=x,t<T^r_\infty\}},\qquad\forall x\in\s_r.\end{equation}

$\ref{th:fsp_i}$ Because $Q$ and $Q^r$ coincide on $\s_r$, Lemma \ref{samechainsh} tells us that both $X$ and $X^r$ leave for the first time $\s_r$ at the same moment (namely, $\tau_r$). Similarly, the time of exit from $\s_r$ for the jump chains $Y$ and $Y^r$ coincides and we denote it by $\sigma_r$. Replacing $Q$ by $Q^r$ in \eqref{eq:jumpmatrix}, we see that the one-step matrix $\Pi^r:=(\pi^r(x,y))_{x,y\in\s}$ is such that $\pi^r(x,\cdot)=1_x(\cdot)$ for each $x\not\in\s_r$. For this reason, Algorithm \ref{gilalg} implies that for any $\omega\in\Omega$
$$Y^r_n(\omega)=x\not\in\s_r\Rightarrow Y^r_{n+m}(\omega)=x\quad\forall m\in\n.$$
Due to the definition of $\sigma_r$, we have that $Y^r_{\sigma_r(\omega)}(\omega)$ does not belong to $\s_r$ if $\sigma_r(\omega)$ is finite  and so
\begin{equation}\label{eq:yrsr}Y^r_n(\omega)=Y^r_{\sigma_{r}(\omega)}(\omega)\not\in\s_r\qquad \forall n\geq\sigma_{r}(\omega),\quad\text{if}\quad\sigma_{r}(\omega)<\infty,\end{equation}
formalising the notion that $X^r$ gets stuck in the first state in enters once it leaves the truncation. The above implies that $\{Y^r_n=x\}=\{Y^r_n=x,n<\sigma_{r}\}$ for every $x\in\cal{S}_r$. Using the above,
\begin{align}&\{Y^r_n=x,T_n^r\leq t<T_{n+1}^{r}\}=\{Y^r_n=x,T_n^r\leq t<T_{n+1}^{r},n<\sigma_r\}\nonumber\\
&=\{Y^{r+1}_n=x,T_n^{r+1}\leq t<T_{n+1}^{r+1},n<\sigma_r\}\subseteq\{Y^{r+1}_n=x,T_n^{r+1}\leq t<T_{n+1}^{r+1}\},\nonumber\end{align}
for all $x\in\s_r$, where the second equality follows from \eqref{eq:sameyt} in Lemma \ref{samechainsh} after noting that  the definition of $Q^r$ in \eqref{eq:qr} remains unchanged if we replace $q(x,y)$ with $q^{r+1}(x,y)$. Taking the union over $n\in\n$, we obtain 
\begin{align*}
\{X_t^r=x,t<T^r_\infty\}&=\bigcup_{n=0}^\infty \{Y_n^r=x,T^r_n\leq t<T^r_{n+1}\}\\
&\subseteq\bigcup_{n=0}^\infty \{Y_n^{r+1}=x,T^{r+1}_n\leq t<T^{r+1}_{n+1}\}=\{X_t^{r+1}=x,t<T^{r+1}_\infty\},
\end{align*}
for all $x\in\s_r$. Taking expectations and applying Theorem \ref{CME} yields $p_t^r(x)\leq p^{r+1}_t(x)$ for each $x\in \s_r$. Replacing $X^{r+1}$ with $X$ in this argument, shows that $p_t^r(x)\leq \dots\leq p_t(x)$ for each $x\in\s_r$.

$\ref{th:fsp_ii}$ Theorem \ref{charactt} and \eqref{eq:md1} tell us that 
$$\Pbl{\{\tau_r\leq t\}}=\sum_{x\not\in\s_r}\left(\gamma(x)+\int_0^t\left(\sum_{y\in\s_r}p_s^r(y)q(y,x)\right)ds\right).$$
Theorem \ref{CME} and \eqref{eq:xfsp} then imply that
$$\Pbl{\{\tau_r\leq t\}}=\Pbl{\{X^r_t\not\in\s_r,t<T_\infty\}}=\Pbl{\{t< T_\infty^r\}}-\Pbl{\{X^r_t\in\s_r,t<T_\infty\}}.$$
Because $\s_r$ is finite, \eqref{eq:qr} implies that $x\mapsto q^r_x$ is a bounded function. Using the definition of $T^r_\infty$ and the law of large numbers we have that $X^r$ is non-explosive:
\begin{align}\label{eq:qrnonexp}T^r_\infty&=\sum_{n=0}^\infty\left(1_{\{q^r_{Y^r_n}=0\}}-\frac{1_{\{q^r(Y^r_n,Y^r_n)\neq0\}}}{q^r(Y^r_n,Y^r_n)}\right)\xi_{n+1}\\
&\geq\left(1\wedge\min_{x\in\s_r}\frac{1}{-q(x,x)}\right)\sum_{n=0}^\infty\xi_{n+1}=\infty,\quad \text{almost surely}.\nonumber\end{align}
For this reason, using \eqref{eq:xfsp} we have that
$$p_t^r(\s)=p_t^r(\s_r)=\Pbl{\{X^r_t\in\s_r,t<T_\infty\}}=1-\Pbl{\{\tau_r\leq t\}}=\Pbl{\{t<\tau_r\}}.$$

$\ref{th:fsp_iii}$ The equality and inequality follow from $\ref{th:fsp_i}$--$\ref{th:fsp_ii}$ and the fact that the total variation norm of an unsigned measure is its mass. The function $t\mapsto p_t(\s)$ is non-increasing because $p_t(\s)=\Pbl{\{t<T_\infty\}}$. Theorem 5 of Chapter II.18 in \cite{Chung1967} implies that $p_t(\s)<1$ for a given $t>0$ if and only if $p_t(\s)<1$ for all $t>0$. These facts and the monotone convergence theorem imply that $p_t(\s)<1$ for any given $t>0$ if and only if $\Pbl{\{T_\infty=\infty\}}<1$. For this reason, the inequality is sharp if and only if $\Pbl{\{T_\infty=\infty\}}=1$.

$\ref{th:fsp_iv}$ This is an immediate consequence of $\ref{th:fsp_ii}$--$\ref{th:fsp_iii}$ and the fact that $\{\tau_r\}_{r\in\n}$ is an increasing sequence.

$\ref{th:fsp_v}$  The monotone convergence theorem and $\ref{th:fsp_iii}$ imply that
$$\lim_{r\to\infty}\norm{p_t-p^r_t}_{TV}=\Pbl{\{t<T_\infty\}}-\lim_{r\to\infty} \Pb(\{t<\tau_{r}\}).$$
The claim then follows from Lemma \ref{tautin}.
\end{proof}
\section{The proof of the analytical characterisation of the exit distribution and occupation measure}
The proof of Theorem \ref{charactt} relies on the auxiliary chain $\hat{X}$ defined immediately after the theorem's statement. In what follows, let $\hat{Y}:=\{\hat{Y}_n\}_{n\in\n}$, $\{\hat{T}_n\}_{n\in\n}$, and $\hat{T}_\infty$ to denote the jump chain, jump times, and explosion time of $\hat{X}$. The theorem's proof  builds on the following simple lemma.

\begin{lemma}\label{exbst}  The chain $X$ does not explode before first leaving the domain if and only if $\hat{X}$ does not explode:
$$1_{\{\tau\leq T_\infty\}}=1_{\{\hat{T}_\infty=\infty\}}\qquad \Pb\text{-almost surely.}$$
%
\end{lemma}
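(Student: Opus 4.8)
The plan is to couple $X$ with its absorbed companion $\hat{X}$ via Lemma~\ref{samechainsh}, and then to split $\Omega$ according to whether or not the jump chain ever leaves $\cal{D}$.

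First I would apply Lemma~\ref{samechainsh} with $\bar{Q}=\hat{Q}$ (legitimate, since $\hat{Q}$ agrees with $Q$ on $\cal{D}$ by~\eqref{eq:qhat}): this gives $\sigma=\hat{\sigma}$, $\tau=\hat{\tau}$, and, via~\eqref{eq:sameyt}, $Y_n=\hat{Y}_n$ and $T_n=\hat{T}_n$ for every $n\leq\sigma$, where $\hat\sigma:=\inf\{n:\hat{Y}_n\notin\cal{D}\}$. On the event $\{\sigma<\infty\}$ I would then show that $\hat{T}_\infty=\infty$ almost surely: once $\hat{Y}$ first lands outside $\cal{D}$ at step $\hat\sigma$, definitions~\eqref{eq:qhat} and~\eqref{eq:jumpmatrix} make that state absorbing for the jump chain, so $\hat{Y}_n=\hat{Y}_{\hat\sigma}$ and $q(\hat{Y}_{n-1},\hat{Y}_{n-1})=0$ for all $n>\hat\sigma$, whence Algorithm~\ref{gilalg} sets $\hat{T}_n=\hat{T}_{n-1}+\xi_n$ for $n>\hat\sigma$ and therefore $\hat{T}_\infty=\hat{T}_{\hat\sigma}+\sum_{n>\hat\sigma}\xi_n=\infty$ a.s., because $\sum_n\xi_n=\infty$ a.s.\ (the $\xi_n$ are i.i.d.\ unit-mean exponentials; e.g.\ apply the second Borel--Cantelli lemma to the independent events $\{\xi_n>1\}$). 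On the other hand, Lemma~\ref{hitdc} gives $\tau=T_\sigma$ on $\{\sigma<\infty\}$, and $T_\sigma\leq T_\infty$ always since $\{T_n\}_{n\in\n}$ increases to $T_\infty$; hence $1_{\{\tau\leq T_\infty\}}=1=1_{\{\hat{T}_\infty=\infty\}}$ on $\{\sigma<\infty\}$.

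Next I would treat the complementary event $\{\sigma=\infty\}$. Here $Y_n\in\cal{D}$ for all $n$, so~\eqref{eq:sameyt} holds for every $n$, and since the rates and jump probabilities of $X$ and $\hat{X}$ coincide on $\cal{D}$ this forces $T_n=\hat{T}_n$ for all $n$, hence $\hat{T}_\infty=T_\infty$. Meanwhile Lemma~\ref{hitdc} gives $\tau=\infty$ on $\{\sigma=\infty\}$, so there $1_{\{\tau\leq T_\infty\}}=1_{\{T_\infty=\infty\}}=1_{\{\hat{T}_\infty=\infty\}}$. Combining the two (complementary) events yields the claimed almost-sure identity. The only subtle point, which I would want to state carefully, is the one flagged above: that the absorbing states of $\hat{X}$ trap the chain for an infinite amount of \emph{continuous} time. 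This is not automatic from the rate matrix alone but follows from the divergence of the residual sum of exponential holding times; everything else is bookkeeping with Lemmas~\ref{samechainsh} and~\ref{hitdc}.
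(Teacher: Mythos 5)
Your proposal is correct and follows essentially the same route as the paper's proof: couple $X$ and $\hat{X}$ via Lemma~\ref{samechainsh}, note that on $\{\sigma<\infty\}$ the absorbed chain gets stuck in a state with zero exit rate so that $\hat{T}_\infty=\hat{T}_\sigma+\sum_{n>\sigma}\xi_n=\infty$ almost surely, and on $\{\sigma=\infty\}$ use $T_n=\hat{T}_n$ for all $n$ to get $T_\infty=\hat{T}_\infty$, concluding with Lemma~\ref{hitdc}. The only differences are cosmetic (you partition $\Omega$ by $\{\sigma<\infty\}$ versus the paper's decomposition $\{\tau\leq T_\infty\}=\{\tau<\infty\}\cup\{\tau=T_\infty=\infty\}$, and you justify $\sum_n\xi_n=\infty$ by Borel--Cantelli instead of the law of large numbers), plus a harmless notational slip where $q(\hat{Y}_{n-1},\hat{Y}_{n-1})=0$ should read $\hat{q}(\hat{Y}_{n-1},\hat{Y}_{n-1})=0$.
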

\begin{proof}By its definition \eqref{eq:hitthec}, the exit time is no greater than the explosion time if and only if the chain exits the domain before any explosion occurs or the chain neither exits the domain nor explodes:
$$\{\tau\leq T_\infty\}=\{\tau<T_\infty\}\cup\{\tau=T_\infty=\infty\}=\{\tau<\infty\}\cup\{\tau=T_\infty=\infty\}.$$
Because these events are disjoint, it is enough to argue that
\begin{equation}\label{eq:1nfhdsf}1_{\{\tau<\infty\}}=1_{\{\tau<\infty,\hat{T}_\infty=\infty\}},\quad 1_{\{\tau=T_\infty=\infty\}}=1_{\{\tau=\hat{T}_\infty=\infty\}},\quad \Pb\text{-almost surely.}\end{equation}
Because the jump times of both chains agree as long as no exit occurs, see \eqref{eq:sameyt}, we have that 
$$T_\infty(\omega)=\lim_{k\to\infty}T_k(\omega)=\lim_{k\to\infty}\hat{T}_k(\omega)=\hat{T}_\infty(\omega)\qquad\forall\omega\in\Omega:\sigma(\omega)=\infty.$$
where $\sigma$ denotes the time-step \eqref{eq:sigma} that $Y$ and $\hat{Y}$ simultaneously (Lemma \ref{samechainsh}) leave the domain. The second equation in \eqref{eq:1nfhdsf} then follows from Lemma \ref{hitdc}. To prove the first equation, notice that an analogous argument as that behind \eqref{eq:yrsr} shows that 
\begin{equation}\label{eq:absjump}\hat{Y}_k(\omega)=\hat{Y}_{\sigma(\omega)}(\omega)\quad\forall k\geq\sigma(\omega),\quad\text{if}\quad\sigma(\omega)<\infty.\end{equation}
Combining the above with the law of large numbers, we have that
\begin{align*}1_{\{\sigma=l\}}\hat{T}_\infty&=1_{\{\sigma=l\}}\sum_{k=0}^\infty (\hat{T}_{k+1}-\hat{T}_k)\geq 1_{\{\sigma=l\}}\sum_{k=l}^\infty (\hat{T}_{k+1}-\hat{T}_k)\\
&\geq 1_{\{\sigma=l\}}\sum_{k=l}^\infty \xi_{k+1}=1_{\{\sigma=l\}}\cdot\infty\quad\Pb\text{-almost surely},\end{align*}
for any $l\in\n$. Summing the above over $l\in\n$, we find that
$$1_{\{\sigma<\infty\}}\hat{T}_\infty\geq1_{\{\sigma<\infty\}}\cdot\infty\qquad\Pb\text{-almost surely}.$$
The first equation in \eqref{eq:1nfhdsf} then follows from Lemma \ref{hitdc}.
\end{proof}

We are now in a position prove our characterisation of $\mu$ and $\nu$.
\begin{proof}[Proof of Theorem 2.5] We begin with the occupation measure.  Lemma \ref{samechainsh} implies that
\begin{align}\label{eq:fnhyduafns}\int_0^{t\wedge\tau\wedge T_\infty}1_x(X_s)ds&=\int_0^{t\wedge\tau\wedge \hat{T}_\infty}1_x(\hat{X}_s)ds\\
&=1_{\{\tau\leq t\}}\int_0^{\tau\wedge\hat{T}_\infty}1_x(\hat{X}_s)ds+1_{\{\tau>t\}}\int_0^{t\wedge\hat{T}_\infty}1_x(\hat{X}_s)ds.\nonumber\end{align}
If we can argue that
\begin{equation}\label{eq:h7dha}1_{\{\tau\leq t\}}\int_\tau^{t\wedge \hat{T}_\infty}1_x(\hat{X}_s)ds=0\qquad\forall x\in\cal{D},\end{equation}
then adding the left-hand side of \eqref{eq:h7dha} to the right-hand side of \eqref{eq:fnhyduafns}, taking expectations, using Tonelli's theorem, and applying Theorem \ref{CME} to $\hat{X}$ yields the characterisation of the occupation measure. The above follows from the fact that $\hat{X}$ hits an absorbing state as soon as it leaves the domain. Formally, Lemmas \ref{hitdc}--\ref{samechainsh} imply that
$$\{\tau\leq s,\hat{T}_k\leq s<\hat{T}_{k+1}\}=\{\hat{T}_{\max\{\sigma,k\}}\leq s<\hat{T}_{k+1}\}=\{\sigma\leq k, \hat{T}_k\leq s<\hat{T}_{k+1}\}$$
for all $k\in\n$. The above and \eqref{eq:absjump} tell us that
\begin{align}\nonumber \hat{X}_s(\omega)&=\sum_{k=0}^\infty 1_{\{\hat{T}_k\leq s< \hat{T}_{k+1}\}}(\omega)\hat{Y}_k(\omega)=\sum_{k=\sigma(\omega)}^\infty 1_{\{\hat{T}_k\leq s< \hat{T}_{k+1}\}}(\omega)\hat{Y}_k(\omega)\\
&=1_{\{s\geq \hat{T}_\sigma\}}(\omega)\hat{Y}_{\sigma(\omega)}=\hat{Y}_{\sigma(\omega)}(\omega)=X_{\tau(\omega)}(\omega),\qquad\forall (s,\omega):\tau(\omega)\leq s<\hat{T}_\infty(\omega),\label{eq:stuck}\end{align}
where the last equality follows from Lemmas \ref{hitdc}--\ref{samechainsh}. By definition, the paths of $X$ are c\'adl\'ag (with respect to the discrete topology on $\s$) implying that $X_\tau$ lies outside of the domain and \eqref{eq:h7dha} follows from \eqref{eq:stuck}.

For the characterisation of the exit distribution, fix any $x\not\in\cal{D}$ and note that
\begin{align*}\mu([0,t],x)&=\Pb(\{X_{\tau}=x,\tau\leq t\})=\Pb(\{X_{\tau}=x,\tau\leq t, \tau\leq T_\infty\})\\
&=\Pb(\{{X}_{\tau}=x,\tau\leq t,\hat{T}_\infty=\infty\})=\Pb(\{\hat{X}_{\tau}=x,\tau\leq t< \hat{T}_\infty,\hat{T}_\infty=\infty\})\\
&=\Pb(\{\hat{X}_{t}=x,\tau\leq t< \hat{T}_\infty,\hat{T}_\infty=\infty\})=\Pb(\{\hat{X}_{t}=x,\tau\leq t< \hat{T}_\infty\})\\
&=\Pb(\{\hat{X}_{t}=x,t< \hat{T}_\infty\})=\hat{p}_t(x).
\end{align*}
The first equality follows from the definition of $\mu$, the second that of $\tau$, the third from Lemma \ref{exbst}, the fourth from Lemma \ref{samechainsh}, the fifth from \eqref{eq:stuck}, the sixth from Lemma \ref{exbst}, the seventh from the fact that $\hat{X}_t$ lies outside of the domain only if its exit time is no greater than $t$ and \eqref{eq:sametime}, and the eighth from Theorem \ref{CME}.  Exploiting the continuity of $t\mapsto\hat{p}_t(x)$ (Theorem \ref{CME}) and applying the monotone convergence theorem to $\mu(\cdot,x)$ implies that
$$\mu([0,t),x)=\lim_{n\to\infty}\mu([0,t(1-1/n)],x)=\lim_{n\to\infty}\hat{p}_{t(1-1/n)}(x)=\hat{p}_t(x),\qquad\forall x\not\in\cal{D},$$
thus completing the proof of the first equation in \eqref{eq:jointchar1}. 
\end{proof}

\end{document}